\newtheorem{definition}{Definition}%[section]
\newtheorem{theorem}{Theorem}%[section]
\newtheorem{proposition}{Proposition}%[section]
\newtheorem{remark}{Remark}%[section]
\newtheorem{example}{Example}%[section]
\newtheorem{assumption}{Assumption}%[section]
\newenvironment{proof}{\noindent{\bf Proof:}}{$\Box$\medskip}
\newcommand{\C}{\mathbb{C}}
\newcommand{\g}{\mathfrak{g}}
\newcommand{\G}{\mathcal{G}}
\newcommand{\Ham}{\mathcal{H}}
\newcommand{\bk}{{\bf k}}
\newcommand{\R}{\mathbb{R}}
\newcommand{\W}{\mathcal{W}}
\newcommand{\Z}{\mathbb{Z}}
\newcommand{\zero}{\mathbf{0}}
\newcommand{\uno}{{\, 1\!\! 1\,}}
\newcommand{\sh}{{\,\scriptstyle \sqcup\!\sqcup\,}}
\title{Computing normal forms and formal invariants of dynamical systems by means of word series}
\author{A. Murua\footnote{Konputazio Zientziak eta A.\ A.\  Saila, Informatika
 Fakultatea, UPV/EHU, E--20018 Donostia--San Sebasti\'{a}n,  Spain. Email: Ander.Murua@ehu.es}
\  and J.M. Sanz-Serna\footnote{(Corresponding author) Departamento de  Matem\'aticas, Universidad Carlos III de Madrid, Legan\'es (Madrid), Spain.
 Email: jmsanzserna@gmail.com}}
\date{\today}
\begin{document}
\maketitle
\begin{center}
{\em Dedicated to Juan Luis V\'azquez on his 70th birthday}
\end{center}
\bigskip
\begin{abstract}
We show how to use extended word series in the reduction of continuous and discrete dynamical systems to normal form and in the computation of formal invariants of motion in Hamiltonian systems. The manipulations required  involve complex numbers rather than vector fields or diffeomorphisms. More precisely we construct a  group $\overline{\G}$ and a Lie algebra $\overline{\g}$ in such a way that the elements of $\overline{\G}$ and $\overline{\g}$ are families of complex numbers; the operations to be performed involve the multiplication $\bigstar$ in $\overline{\G}$ and the bracket of
$\overline{\g}$ and result in {\em universal} coefficients that are then applied to write the normal form or the invariants of motion of the specific problem under consideration.
\end{abstract}

\medskip

\noindent{\bf Keywords and sentences:} Word series, extended word series, B-series, words, Hopf algebras, shuffle algebra, Lie groups, Lie algebras, Hamiltonian problems, integrable problems, normal forms, resonances.
\medskip

\noindent{\bf Mathematics Subject Classification (2010)} 34C20,  70H05

\section{Introduction}

In this paper we show how to use extended word series in the reduction of continuous and discrete dynamical systems to normal form and in the computation of formal invariants of motion in Hamiltonian systems of differential equations. The manipulations required in our approach involve complex numbers rather than vector fields or diffeomorphisms. More precisely, we construct a  group $\overline{\G}$ (semidirect product of the additive group of $\C^d$ and the group of characters of the shuffle Hopf algebra) and a Lie algebra $\overline{\g}$ in such a way that the elements of $\overline{\G}$ and $\overline{\g}$ are families of complex numbers; the operations to be performed involve the multiplication $\bigstar$ in $\overline{\G}$ and the bracket of
$\overline{\g}$ and result in {\em universal} coefficients that are then applied to write the normal form or the invariants of motion of the specific problem under consideration.\footnote{In fact it is possible to present $\overline{\G}$ and $\overline{\g}$ in terms of a {\em universal property} in the language of category theory. We shall not be concerned with that task here.}

The present approach originated from our earlier work on the use of formal series to analyze numerical integrators; see \cite{china} for a survey of that use. In a seminal paper, Hairer and Wanner \cite{HW} introduced the concept of B-series as a means to perform systematically the manipulations required to investigate the accuracy of Runge-Kutta and related numerical methods for ordinary differential equations. B-series are series of functions; there is a term in the series associated with each rooted tree. The letter B here refers to John Butcher, who in the early 1960's enormously simplified,  through a  book-keeping system based on rooted trees, the task of Taylor expanding Runge-Kutta solutions. This task as performed by Kutta and others before John Butcher's approach was extraordinarily complicated and error prone. Key to the use of B-series is the fact that the substitution of a B-series in another B-series  yields a third B-series, whose coefficients may be readily found by operating with the corresponding rooted trees and are independent of the differential equation being integrated. In this way the set of coefficients itself may be endowed with a product operation and becomes the so-called Butcher's group. The set of B-series resulting from a specific choice of differential equations is then a homomorphic image of the Butcher group. It was later discovered (see eg \cite{brouder}) that the Butcher group was not a mere device to understand numerical integrators but an important mathematical construction (the group of characters of a Hopf algebra structure on the set of rooted trees) which has found applications in several fields, notably in renormalization theories and noncommutative geometry. In \cite{part1} and \cite{part2} B-series found yet another application outside numerical mathematics when they were employed to average systems of differential equation with periodic or quasiperiodic forcing.

Our work here is based on {\em word} series  \cite{words}, an alternative to B-series defined in \cite{orlando}, \cite{part3} (see also \cite{anderfocm}, \cite{part2}). Word series are parameterized by the words of an alphabet $A$ and are more compact than the corresponding B-series parameterized by rooted trees with coloured nodes.  Section 2 provides a review of the use of word series. The treatment there focuses on the presentation of the rules that apply when manipulating the series in practice and little attention is paid to the more algebraic aspects. In particular the material in Section 2 is narrowly related to standard constructions involving the shuffle Hopf algebra over the alphabet $A$ (see \cite{words} and its references); we avoid to make this connection explicit and prefer to give a self-contained elementary exposition.
Section 3 presents the class of perturbed problems investigated in the paper; several subclasses are discussed in detail, including nonlinear perturbations of linear problems and perturbations of integrable problems written in action/angle variables \cite{arnoldmec}. In order to account for the format (unperturbed + perturbation) being considered, we employ what we call {\em extended word series}. In the particular case of action/angle integrable problems, extended word series have already been introduced in \cite{words}; here we considerably enlarge their scope. Section 4, that is parallel to Section 2, studies the rules that apply to the manipulation of extended words series. Again many of those rules essentially appear in \cite{words}, but the {\em ad hoc} proofs we used there do not apply in our more general setting so that new, more geometric proofs are required. The main results of the paper are presented in the final Section 5. We first address the task of reducing differential systems to normal forms via changes of variables (Theorem~\ref{th:theoremnormal}); as pointed out above, both the normal form and the  change of variables are written in terms of scalar coefficients that may be easily computed. For Hamiltonian problems the normal form is Hamiltonian and the change of variables symplectic. We also describe in detail (Theorem~\ref{th:new}) the freedom that exists when finding the normal form/change of variables. By going back to the original variables, we find a decomposition of the given vector field as a commuting sum of two fields: the first generates a flow that is conjugate to the unperturbed problem and the second accounts for the effects of the perturbations that are not removable by conjugation. This decomposition is the key to the construction of formal invariants of motion in Hamiltonian problems. We provide very simple recursions for the computation of the coefficients that are required to write down the decomposition of the vector field and the invariants of motion. Finally we briefly outline a parallel theory for discrete dynamical systems.

Some of the results in Section 5 have precedents in the literature. The reduction to normal form (but not the investigation of the associated freedom) appears in \cite{words} but only the particular setting of action/angle variables (ie of Example~3 in Section~\ref{ss:examples}). Decompositions of the field as a commuting sum features in \cite{part2}, but only for a class of problems much narrower than the one we deal with here. That paper also presents, in a more restrictive scenario, recursions for the computation of invariants of motion. However the methodology in \cite{part2} is different from that used here and the recursions found there do not coincide with those presented in this work. The reference \cite{fm} is very relevant. It works in the restricted context of Example~1 in Section~\ref{ss:examples} and  emphasizes  the role played by different Hopf algebras, in particular the shuffle Hopf algebra and the commutative Hopf algebra of rooted trees with colored vertices, and the connection with Ecalle's mould calculus. Studied in detail is  the reduction of vector fields and diffeomorphisms to their linear parts with the help of  series of linear differential operators rather than of word series (the relation between both kinds of series has been discussed in
\cite{words}).

It should be pointed out that, just as the notion of word series may be modified to define extended word series adapted to perturbed problems, it is both possible and interesting (cf \cite{fm}) to consider {\em extended B-series}, a task that we plan to address in future contributions.

All the developments here operate with formal series of smooth maps. In order not to clutter the exposition we shall omit throughout the qualifiers \lq formal\rq\ and \lq smooth\rq. It is of course possible, after truncating the expansions, to derive bounds as in \cite{orlando}, \cite{part3}, but such a task is out of our scope. The  groups used here may be turned into Lie groups modeled in  Fr\'{e}chet spaces, see \cite{geir}.

\section{Word series}
\label{sec:words}

In this section we briefly review the use of  word series, a tool that is essential for the work in this paper. Proofs and additional details may be seen in \cite{words}.

\subsection{Definition of word series}
Let $A$ be a  finite or  infinitely countable set of indices (the alphabet) and assume that, for each $\ell\in A$, $f_\ell:\C^D \to \C^D$  is a  map. With each nonempty word $\ell_1\cdots \ell_n$ made with letters from $A$ we associate a {\em word basis function}. These functions are recursively defined by
\begin{equation*}%\label{eq:wbf}
f_{\ell_1\cdots \ell_n}\!(x) =  f^\prime_{\ell_2\cdots \ell_n}\!(x)\,f_{\ell_1}\!(x), \quad n >1
\end{equation*}
($f^\prime_{\ell_2\cdots \ell_n}\!(x)$ denotes the value at $x$ of the Jacobian matrix of $f_{\ell_2\cdots \ell_n}$). For the empty word, the associated  basis function is the identity map $x\mapsto x$. We  denote by $\W$ the set of all words (including the empty word $\emptyset$) and by $\C^\W$ the vector space of all the mappings $\delta: \W\to \C$; the notation $\delta_w$ will be used to refer to the value that $\delta$ takes at the word $w\in W$.

Given the maps $f_\ell$, $\ell\in A$, with each $\delta\in\C^\W$  we associate the formal series
$$
W_\delta(x) = \sum_{w\in \W} \delta_w f_w(x),
$$
and say that $W_\delta(x)$ is the {\em word series} with coefficients $\delta$. It is also possible to consider the real case with $f_\ell:\R^D \to \R^D$ and $\delta\in\R^\W$.

As an example, consider the nonautonomous initial-value problem
\begin{equation}\label{eq:ode}
\frac{d}{dt} x= \sum_{\ell\in A} \lambda_\ell(t) f_\ell(x),\qquad x(0) = x_0,
\end{equation}
where the $\lambda_\ell$ are given  scalar-valued functions of $t$. Its solution may be represented as $x(t) = W_{\alpha(t)}(x_0)$,
with the coefficients $\alpha(t)$ given by the iterated integrals
\begin{equation}\label{eq:alpha}
\alpha_{\ell_1\cdots \ell_n}\!(t) = \int_0^t dt_n\,\lambda_{\ell_n}\!(t_n)\int_0^{t_n}dt_{n-1}\,\lambda_{\ell_{n-1}}\!(t_{n-1})\cdots \int_0^{t_2} dt_1\,\lambda_{\ell_1}\!(t_1).
\end{equation}
The series $W_{\alpha(t)}(x_0)$ is essentially the Chen-Fliess series used in control theory.

\subsection{Operations with word series}

Given $\delta,\delta^\prime\in\C^\W$, their  convolution product $\delta\star\delta^\prime\in\C^\W$ is defined by
\begin{equation*}%\label{eq:convol}
(\delta\star\delta^\prime)_{\ell_1\cdots \ell_n} = \delta_\emptyset\delta^\prime_{\ell_1\cdots \ell_n}
+ \sum_{j=1}^{n-1} \delta_{\ell_1\cdots \ell_j}\delta^\prime_{\ell_{j+1}\cdots \ell_n}
+\delta_{\ell_1\cdots \ell_n}\delta^\prime_\emptyset
\end{equation*}
($(\delta\star\delta^\prime)_\emptyset = \delta_\emptyset\delta^\prime_\emptyset$). This product is not commutative, but it is associative and has a unit (the element $\uno\in \C^\W$ with $\uno_\emptyset = 1$ and $\uno_w = 0$ for $w\neq \emptyset$).

If $w$ and $w^\prime$ are words, their shuffle product will be denoted by  $w\sh w^\prime$.
 The set $\G$ consists of those $\gamma \in \C^\W$  that satisfy the  shuffle relations: $\gamma_\emptyset = 1$ and, for each $w,w^\prime\in \W$,
\begin{equation*}
\gamma_w\gamma_{w^\prime} = \sum_{j=1}^N \gamma_{w_j}\qquad \mbox{\rm if}\qquad w\sh w^\prime = \sum_{j=1}^N w_j.
\end{equation*}
 This set is a group for the operation $\star$.
For $\gamma\in\G$, $W_\gamma(x)$ may be substituted in an arbitrary word series $W_\delta(x)$, $\delta\in\C^\W$, to get a new word series whose coefficients are
given by the convolution product $\gamma\star\delta$:
\begin{equation}\label{eq:act}
W_\delta\big (W_{\gamma}(x)\big) = W_{\gamma\star \delta}(x).
\end{equation}

We denote by $\g$ the set of elements $\beta\in\C^\W$ such that $\beta_\emptyset = 0$ and for each pair of nonempty words $w,w^\prime$,
\[
\sum_{j=1}^N \beta_{w_j} = 0\qquad \mbox{\rm if}\qquad w\sh w^\prime = \sum_{j=1}^N w_j.
\]
With  the skew-symmetric convolution bracket defined by
\begin{equation*}
[\beta,\beta^\prime] = \beta\star\beta^\prime-\beta^\prime\star\beta,
\end{equation*}
 $\g$ is a Lie algebra and, in fact, it may be seen as the Lie algebra of  $\G$ if this is seen as a Lie group \cite{geir}. The elements in $\G$ and $\g$ are related by the usual exponential and logarithmic power series (where, of course, powers are taken with respect to the product $\star$). For $\beta\in\g$ and arbitrary $\delta\in \C^\W$, the product $\beta*\delta$ has the following word series interpretation:
 \[
 W_{\beta*\delta}(x) =  W_\delta^\prime(x) W_\beta(x).
 \]
This implies in particular that the convolution bracket just defined corresponds to the Lie-Jacobi bracket of the associated word series:
\[
 W^\prime_{\beta^\prime}(x) W_\beta(x) -  W^\prime_\beta(x) W_{\beta^\prime}(x)= W_{[\beta,\beta^\prime]}(x),\qquad \beta,\beta^\prime\in \g.
\]
For $\beta\in\g$ the Dynkin-Specht-Wever formula  may be used to rewrite the word series in terms of iterated commutators:
\begin{equation}\label{eq:dynkin}
W_\beta(x)
 = \sum_{n=1}^\infty \frac{1}{n} \sum_{\ell_1,\dots,\ell_n\in A}
 \beta_{\ell_1\cdots \ell_n} [[\cdots[[f_{\ell_1},f_{\ell_2}],f_{\ell_3}]\cdots],f_{\ell_n}](x).
\end{equation}
(For $n=1$ the terms in the inner sum are of the form $\beta_{\ell_1} f_{\ell_1}(x)$.)

If $\beta(t)\in\g$ for each real $t$, the initial value problems
\begin{equation}\label{eq:ode3}
\frac{d}{dt} x(t) = W_{\beta(t)}(x(t)),\qquad x(0) =x_0,
\end{equation}
may be solved formally by using the ansatz $x(t) = W_{\alpha(t)}(x_0)$. In fact, in view of (\ref{eq:act}), we may write
$$
\frac{d}{dt}W_{\alpha(t)}(x_0) = W_{\beta(t)}(W_{\alpha(t)}(x_0)) = W_{\alpha(t)\star\beta(t)}(x_0),\qquad W_{\alpha(0)}(x_0) = x_0,
$$
which leads to a linear, nonautonomous initial value problem in $\G$
\begin{equation}\label{eq:odebeta}
\frac{d}{dt} \alpha(t) = \alpha(t)\star\beta(t),\qquad \alpha(0) = \uno.
\end{equation}
  that may be solved by successively determining $\alpha_w(t)$, $w\in\W_n$, $n = 0, 1,\dots$  For each $t$ the element $\alpha(t)\in\C^\W$ found in this way belongs to the group $\G$.
(The solvability of (\ref{eq:odebeta}) is referred to as the regularity of $\G$ in Lie group terminology.) Conversely, any   curve $\alpha(t)$ of group elements with $ \alpha(0) = \uno$ solves a problem of the form (\ref{eq:odebeta}) with
$$
\beta(t) = \alpha(t)^{-1} \star \frac{d}{dt} \alpha(t).
$$
A change of variables $x = W_{\kappa}(X)$, $\kappa\in\G$, transforms the problem (\ref{eq:ode3}) into
$$
\frac{d}{dt} X(t) = W_{B(t)}(X(t)), \qquad X(0) = X_0,
$$
with $B(t) = \kappa \star\beta(t)\star \kappa^{-1}$, $W_{\kappa}(X_0) = x_0$.

Consider finally the particular case where the dimension $D$  is even and each $f_\ell(x)$ is a Hamiltonian vector field, i.e.\ $f_\ell(x) = J^{-1}\nabla H_\ell(x)$, where $J^{-1}$ is the standard symplectic matrix. In view of the correspondence that exists between Hamiltonian fields/Lie-Jacobi brackets and Hamiltonian functions/Poisson brackets, for each $\beta\in\g$, the vector field $W_\beta(x)$ is Hamiltonian,
$$
W_\beta(x) = J^{-1}\nabla\Ham_\beta(x)
$$
and its  Hamiltonian function is (see (\ref{eq:dynkin}))
$$
\Ham_\beta(x) = \sum_{w\in\W,\, w\neq\emptyset} \beta_w H_w(x),
$$
where, for each nonempty word $w=\ell_1\cdots \ell_n$,
\begin{equation}\label{eq:wordham}
H_w(x) = \frac{1}{n}\{\{\cdots\{\{H_{\ell_1},H_{\ell_2}\},H_{\ell_3}\}\cdots\},H_{\ell_n}\}(x)
.
\end{equation}
 (We assume that the Poisson bracket has been defined in such a way that the vector field generated by the Poisson bracket of two Hamiltonians is the Lie-Jacobi bracket of the fields corresponding to Hamiltonians; in the literature it is more frequent to use as Poisson bracket the opposite of the one employed here \cite{arnoldmec}.)

If $\beta,\beta^\prime \in\g$, the Poisson bracket of $\Ham_\beta$ and $\Ham_{\beta^\prime}$ may be expressed in terms of the convolution bracket of the coefficients as $\Ham_{[\beta,\beta^\prime]}$.

For Hamiltonian systems, changes of variables $x = W_{\kappa}(X)$, $\kappa\in\G$, are canonically symplectic; after the change of variables the system is again Hamiltonian and the new Hamiltonian function is obtained by changing variables in the old Hamiltonian function:
$$
H_\beta(W_\kappa(X)) = H_B(X),\qquad B = \kappa\star\beta*\star\kappa^{-1}.
$$

\section{Perturbed  problems}

\subsection{Preliminaries}
\label{ss:prelim}

In the remainder of the paper we shall be concerned with initial value problems
\begin{equation}
  \label{eq:odegf}
  \frac{d}{dt} x = g(x)+f(x) , \quad x(0)=x_0,
\end{equation}
where  $f,g:\C^D \to \C^D$  and $f$ can be decomposed as
\begin{equation}
\label{eq:fdec}
f(x) = \sum_{\ell \in A} f_\ell(x)
\end{equation}
for a set of indices $A$, referred to as the alphabet as in Section~\ref{sec:words}. We work under the assumption that,
for each $\ell \in A$, there is $\nu_\ell \in \C$ such that
\begin{equation}
  \label{eq:[f0fa]}
  [g,f_\ell] = \nu_\ell\, f_\ell;
\end{equation}
in other words, each vector field $f_\ell$ is an eigenvector of the operator $[g,\cdot]$ (adjoint of $g$), where $[\cdot,\cdot]$ represents the usual Lie-Jacobi bracket ($[g,f] = f^\prime g-g^\prime f$). Due to well-known properties of the Lie-Jacobi bracket,  (\ref{eq:[f0fa]}) is independent of the choice of coordinates: if an arbitrary  change of variables $x = x(X)$ is performed in (\ref{eq:odegf}), then (\ref{eq:[f0fa]}) holds in the $x$ variables if and only if it holds in the $X$ variables.

The next proposition gives an alternative formulation of the assumption;  $\varphi_t$ denotes the solution flow of $g$.

\begin{proposition}
\label{prop:1} Equation (\ref{eq:[f0fa]}) is equivalent to the requirement that
for   each $x\in\C^D$, and each real $t$,
\begin{equation}
  \label{eq:[f0fa]2}
  \varphi_t^\prime(x)^{-1}f_\ell(\varphi_t(x)) = e^{t\nu_\ell} f_\ell(x).
\end{equation}

\end{proposition}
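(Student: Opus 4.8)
The plan is to recognize the left-hand side of (\ref{eq:[f0fa]2}) as the pullback of the vector field $f_\ell$ by the flow $\varphi_t$ and to show that its derivative with respect to $t$ reproduces the Lie-Jacobi bracket. Concretely, for fixed $x$ I would introduce the curve
\begin{equation*}
u(t,x) = \varphi_t^\prime(x)^{-1} f_\ell(\varphi_t(x)) \in \C^D,
\end{equation*}
so that (\ref{eq:[f0fa]2}) asserts precisely $u(t,x) = e^{t\nu_\ell} f_\ell(x)$. The whole proof hinges on the identity
\begin{equation*}
\frac{d}{dt}\, u(t,x) = \varphi_t^\prime(x)^{-1}\, [g,f_\ell](\varphi_t(x)),
\end{equation*}
which says that differentiating the pullback along the flow is the same as pulling back the bracket; once this is in hand both implications follow immediately.

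To establish this identity I would set $\Phi(t) = \varphi_t^\prime(x)$ and use the variational equation $\dot\Phi(t) = g^\prime(\varphi_t(x))\,\Phi(t)$ with $\Phi(0)=I$, which follows from differentiating $\frac{d}{dt}\varphi_t(x)=g(\varphi_t(x))$ with respect to the initial condition. From it one gets $\frac{d}{dt}\Phi(t)^{-1} = -\Phi(t)^{-1} g^\prime(\varphi_t(x))$. Differentiating $u(t,x)=\Phi(t)^{-1} f_\ell(\varphi_t(x))$ by the product rule and the chain rule, and using $\frac{d}{dt}\varphi_t(x)=g(\varphi_t(x))$, the two resulting terms combine into $\Phi(t)^{-1}\big(f_\ell^\prime g - g^\prime f_\ell\big)(\varphi_t(x))$, which is exactly $\Phi(t)^{-1}[g,f_\ell](\varphi_t(x))$ for the convention $[g,f]=f^\prime g - g^\prime f$ adopted in the paper.

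With the identity proved, the forward implication is direct: assuming (\ref{eq:[f0fa]}), substitution of $[g,f_\ell]=\nu_\ell f_\ell$ gives $\frac{d}{dt}u(t,x)=\nu_\ell\, \varphi_t^\prime(x)^{-1} f_\ell(\varphi_t(x))=\nu_\ell\, u(t,x)$, a scalar-coefficient linear ODE in $t$ with initial value $u(0,x)=f_\ell(x)$ (since $\varphi_0=\mathrm{id}$), whose unique solution is $e^{t\nu_\ell}f_\ell(x)$, i.e.\ (\ref{eq:[f0fa]2}). For the converse, I would assume (\ref{eq:[f0fa]2}) and differentiate it in $t$ at $t=0$: the right-hand side yields $\nu_\ell f_\ell(x)$, while the left-hand side yields $\frac{d}{dt}\big|_{t=0}u(t,x)=[g,f_\ell](x)$ by the identity (evaluated at $t=0$, where $\Phi(0)^{-1}=I$ and $\varphi_0(x)=x$), giving (\ref{eq:[f0fa]}).

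The only genuinely delicate step is the derivation of the differentiation identity, and within it the correct handling of $\frac{d}{dt}\Phi(t)^{-1}$ together with bookkeeping of the two Jacobian terms so that the sign matches the paper's bracket convention; everything else is either the uniqueness theorem for linear ODEs or evaluation at $t=0$.
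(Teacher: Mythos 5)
Your proposal is correct and follows essentially the same route as the paper: differentiate the pullback $\varphi_t^\prime(x)^{-1}f_\ell(\varphi_t(x))$ in $t$ using the variational equation and the derivative of the inverse Jacobian, recognize the result as the pulled-back bracket, integrate the resulting scalar linear ODE for the forward direction, and differentiate at $t=0$ for the converse. The only cosmetic difference is that you isolate the identity $\frac{d}{dt}u = \varphi_t^\prime(x)^{-1}[g,f_\ell](\varphi_t(x))$ as an explicit intermediate step, which the paper carries out inline.
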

\begin{proof} Assume that (\ref{eq:[f0fa]}) holds. If $C(t)$ is the left hand-side of (\ref{eq:[f0fa]2}), then
\begin{eqnarray*}
\frac{d}{dt} C(t) &=& - \varphi_t^\prime(x)^{-1} \left(\frac{d}{dt} \varphi_t^\prime(x)\right) \varphi_t^\prime(x)^{-1} f_\ell(\varphi_t(x))\\
 &&+\varphi_t^\prime(x)^{-1} f^\prime _\ell(\varphi_t(x)) \frac{d}{dt} \varphi_t(x)\\
&=&- \varphi_t^\prime(x)^{-1} g^\prime(\varphi_t(x)) \varphi_t^\prime(x)\: \varphi_t^\prime(x)^{-1}f_\ell(\varphi_t(x))\\
&& +\varphi_t^\prime(x)^{-1} f^\prime _\ell(\varphi_t(x)) g(\varphi_t(x)) \\
&=& \nu_\ell C(t)
\end{eqnarray*}
and integration leads to $C(t) = \exp(t\nu_\ell)C(0)$, ie to (\ref{eq:[f0fa]2}). Conversely, differentiation with respect to $t$ of (\ref{eq:[f0fa]2}) at $t=0$ results in (\ref{eq:[f0fa]}).
\end{proof}

Geometrically (\ref{eq:[f0fa]2}) says that, for each fixed $t$, the diffeomorphism $\varphi_t$ pulls back the vector field $f_\ell$ to the vector field $e^{t\nu_\ell} f_\ell$. In other words, $f_\ell$ is an eigenvector with eigenvalue $\exp(t\nu_\ell)$ of the linear operator that associates with each vector field $h$ its  pull back  $(\varphi_t^\prime(\cdot))^{{-1}}(h\circ\varphi_t)(\cdot)$.

In the applications we have in mind, the differential system in (\ref{eq:odegf}) is seen as a perturbation of the system $(d/dt) x = g(x)$ and the
flow $\varphi_t$ is considered to be known. If the solution $x(t)$ of (\ref{eq:odegf}) is sought in the form $x(t) = \varphi_t(y(t))$, then  $y(0) = x_0$ and, after invoking (\ref{eq:[f0fa]2}), it is trivially found that $y(t)$ satisfies
$$
\frac{d}{dt} y = \sum_{\ell \in A} e^{t\nu_\ell} f_\ell(y).
$$
Since this system is of the form (\ref{eq:ode}) with
\begin{equation}\label{eq:lambdaexp}
\lambda_\ell(t) = e^{t\nu_\ell},\qquad \ell\in A,
\end{equation}
we find that $y(t) = W_{\alpha(t)}(x_0)$, where, as we know, the coefficients $\alpha(t)$ are given by (\ref{eq:alpha}). We conclude that the solution of (\ref{eq:odegf}) has the representation $x(t) =\varphi_t(W_{\alpha(t)}(x_0))$.

\subsection{A class of perturbed problems}
\label{ss:examples}

In what follows we assume that the field $g$ in (\ref{eq:odegf}) lies in a finite-dimensional vector space of commuting vector fields (that is, an Abelian Lie algebra of vector fields), with  a basis $\{g_1,\ldots,g_d\}$ ($[g_j,g_k] = 0$ for $j\neq k$), and that each $f_\ell$ is an eigenvector of each operator $[g_j,\cdot]$. The  elements of the Abelian Lie algebra will be denoted by
\begin{equation}\label{eq:gmu}
g^v = \sum_{j=1}^{d} v_j\, g_j,
\end{equation}
where $v=(v_1,\ldots,v_d)$ is a constant vector.

   More precisely {\em we work hereafter under the following assumption}.

\begin{assumption}
\label{ass:2}
The system (\ref{eq:odegf}) is such that:
\begin{itemize}
\item  $f$ may be decomposed as in (\ref{eq:fdec}).

\item There are linearly independent, commuting vector fields $g_j$ and constants $v_j$, $j=1,\dots,d$, such that $g=g^v$ with $g^v$ as in (\ref{eq:gmu}).

 \item For each $j=1,\dots, d$ and each $\ell \in A$, there is $\nu_{j,\ell} \in \C$ such that
\begin{equation}
  \label{eq:1}
  [g_j,f_\ell] = \nu_{j,\ell}\, f_\ell.
\end{equation}
\end{itemize}
\end{assumption}

Clearly, (\ref{eq:1}) implies that (\ref{eq:[f0fa]}) holds with $\nu_\ell$ given by the ($v$-dependent) quantity
\begin{equation}\label{eq:nuel}
\nu^v_\ell = \sum_{j=1}^d v_j \nu_{j,\ell}.
\end{equation}
If we denote by $\varphi_{v}$ the flow at time $t=1$ of $g^v$, the $t$-flow  of $g^v$ coincides with $\varphi_{tv}$ and, according to
Section~\ref{ss:prelim}, the solution of (\ref{eq:odegf}) has the representation $x(t) = \varphi_{tv}(W_{\alpha(t)}(x_0))$, where the coefficients $\alpha(t)$ are given by (\ref{eq:alpha}), (\ref{eq:lambdaexp}), (\ref{eq:nuel}). Note that these coefficients {\em depend on $v$ and the $\nu_{j,\ell}$ but are otherwise independent of $g$ and $f$}.

Let us now examine some classes of differential systems that satisfy Assumption \ref{ass:2}.

\begin{example}\em
  Consider the case
  \begin{equation}\label{eq:L}
    \frac{d}{dt} x = L x + f(x),
  \end{equation}
  where $L$ is a diagonalizable $D \times D$  matrix, and   $f(x)$  is  polynomial, ie each of its components is a polynomial in the components of $x$. Let $\mu_1$, \dots, $\mu_d$
  denote the distinct nonzero eigenvalues of $L$, so that $L$ may be uniquely decomposed as
\begin{equation*}
  L = \mu_1 L_1 + \cdots + \mu_d L_d,
\end{equation*}
where the $D\times D$ matrices $L_1,\ldots,L_d$  are projectors ($L_j^2 = L_j$) with $L_jL_k = 0$ if $j\neq k$.
Thus (\ref{eq:gmu}) holds for $g_j(x) = L_j x$, $v_j = \mu_j$. Furthermore consider, for each $v=(v_1,\ldots,v_d)$, the
diffeomorphism $x\mapsto \exp(v_1 L_1 + \cdots + v_d L_d) x$. This pulls  $f$ back into
\begin{equation*}
e^{-(v_1 L_1 + \cdots + v_d L_d)} f(e^{v_1 L_1 + \cdots + v_d L_d} x),
\end{equation*}
an expression that
can be uniquely rewritten as a sum
\begin{equation*}
\sum_{(k_1,\ldots,k_d) \in  A} e^{k_1 v_1 + \cdots + k_d v_d} f_{(k_1,\ldots,k_d) }(x),
\end{equation*}
where $A$ is a finite subset of $\Z^d$ and, for each $\bk = (k_1,\ldots,k_d)$ in $A$, $f_\bk$ is a polynomial map. It follows that $f = \sum_\bk f_\bk$ and that
 the pull back of
 $f_\bk$ is $\exp(v_1 L_1 + \cdots + v_d L_d) f_\bk$. According to Proposition~\ref{prop:1}, (\ref{eq:1}) holds with
\begin{equation*}
\nu_{j,\bk} = k_j, \quad \bk \in A,\quad j=1,\ldots,d.
\end{equation*}
The case where the components of $f$ are power series in the components of $x$ may be treated similarly,
cf \cite{fm}. Analytic systems of differential equations having an equilibrium at the origin are of the form (\ref{eq:L}), provided that the linearization at the origin is diagonalizable; the perturbation $f$ then contains terms  of degree $\leq 2$ in the components of $x$.
\end{example}

\begin{example}\em
In some applications, including Hamiltonian mechanics, the system (\ref{eq:L}) possesses some symmetry that implies that the nonzero eigenvalues of $L$ occur in pairs $\pm \mu$, with $+\mu$ and $-\mu$ having the same multiplicity. Let us then assume that, in the preceding example, $d$ is even and the nonzero eigenvalues satisfy
$\mu_{d-j+1}=-\mu_{j}$ for $j= 1, \dots, d$. The matrix $v_1 L_1 + \cdots + v_d L_d$ considered above does not have for arbitrary choices of the parameters $v_j$ a symmetric spectrum and, in order to not loose the symmetry, one may consider an alternative way of satisfying Assumption~\ref{ass:2}. Specifically, we may take
\begin{equation*}
\bar g_j(x) = L_j x - L_{d+1-j}x, \quad j=1,\ldots,d/2,
\end{equation*}
the alphabet $\bar A$  obtained from $A$ through the formula
\begin{equation*}
\bar A = \{(k_1-k_{d},k_2-k_{d-1}, \ldots,k_{d/2}-k_{d/2+1})\ : \
(k_1,\ldots,k_d) \in A \subset \Z^{d/2}\},
\end{equation*}
and $\bar f_{(\bar k_1,\ldots, \bar k_{d/2})}$  given, for each  $(\bar k_1,\ldots,\bar k_{d/2}) \in A$, as the sum of all $f_{(k_1,\ldots,k_d)}$ such that
\begin{equation*}
 (\bar k_1,\ldots,\bar k_{d/2}) =(k_1-k_{d},k_2-k_{d-1}, \ldots,k_{d/2}-k_{d/2+1}).
\end{equation*}
\end{example}
\begin{example}\em
We now consider real systems of the form
$$
\frac{d}{dt} \left[ \begin{matrix}y\\ \theta\end{matrix}\right]
= \left[ \begin{matrix}0\\ \omega\end{matrix}\right]
+f(y,\theta),
$$
where $y\in\R^{D-d}$, $0<d\leq D$, $\omega\in\R^d$ is a vector of frequencies $\omega_j\neq 0$, $j = 1,\dots,d$, and $\theta$ comprises $d$ angles, so that $f(y,\theta)$ is $2\pi$-periodic  in each component of $\theta$ with Fourier expansion
%\begin{equation}\label{eq:fourier}
$$
f(y,\theta) = \sum_{\bk \in\Z^d} \exp(i \bk\cdot \theta)\: \hat f_\bk(y).
$$
%\end{equation}

After introducing the  functions
\begin{equation*}
f_\bk(y,\theta) = \exp(i\bk\cdot \theta)\: \hat f_\bk(y),\qquad y\in\R^{D-d},\: \theta\in\R^d,
\end{equation*}
the system takes the form (\ref{eq:odegf})--(\ref{eq:fdec}) with $x=(y,\theta)$, $A=\Z^d$, and
\begin{equation*}
g(y,\theta)
= \left[ \begin{matrix}0\\ \omega\end{matrix}\right].
\end{equation*}
If some $f_{\bk}(x)$ are identically zero, then  $A$ may of course be taken to be a subset of $\Z^d$.
Assumption~\ref{ass:2} holds with $v_j = \omega_j$ ($j=1,\ldots,d$), each $g_j(x)$  a constant unit vector, and
\begin{equation*}
\nu_{j,\bk} = i \, k_j
\end{equation*}
for each $j=1,\ldots,d$ and each $\bk =(k_1,\ldots,k_d) \in A$.
\end{example}

\begin{example}\em
 Assume that in (\ref{eq:odegf}), the dimension $D$ is even with $D/2-d=m\geq 0$ and that the vector of unknowns takes the form
$$
(p^1, \dots, p^m; a^1,\dots, a^d; q^1,\dots,q^m; \theta^1,\dots,\theta^d),
$$
where $p^j$ is the momentum  conjugate to the co-ordinate $q^j$ and $a^j$ is the momentum (action) conjugate to coordinate (angle) $\theta^j$. Consider the Hamiltonian function
\begin{equation}\label{eq:rennes1}
\sum_{j=1}^d \omega_j H_j(a) + H(p;a;q;\theta),\quad H_j(a) =  \omega_ja^j,\:j=1,\dots,d,
\end{equation}
where $H$ is $2\pi$-periodic in each of the components of $\theta$ and has  Fourier expansion
$$
H(p;a;q;\theta) = \sum_{\bk\in \Z^d} H_\bk(p;a;q;\theta);\quad  H_\bk =\exp(i\bk\cdot\theta)\hat H_\bk(p;a;q),\:\bk\in\Z^d.
$$
 We set $\nu_{j,\bk} = ik_j$; it is then an exercise to check that
\begin{equation*}
\{ H_j,H_k\} = 0,\: j\neq k,\qquad \{H_j,H_\bk\} = \nu_{j,\bk} H_\bk.
\end{equation*}
 If $g_j$ and $f_\bk$ denote the Hamiltonian vector fields corresponding to $H_j$ and $H_\bk$ respectively, we have
\begin{equation*}
[ g_j,g_k] = 0,\: j\neq k,\qquad [g_j,f_\bk] = \nu_{j,\bk} f_\bk,
\end{equation*}
so that Assumption~\ref{ass:2} holds for the Hamiltonian system associated with (\ref{eq:rennes1}). Of course this example is a particular instance of the one we just considered above.
\end{example}

If $v$ is a $d$-vector and $w=\ell_1\cdots \ell_n$ a word, we extend the notation introduced in (\ref{eq:nuel}) and set
\begin{equation*}
\nu^v_{w} =  \nu^v_{\ell_1} +\cdots +\nu^v_{\ell_n}= \sum_{j=1}^{d} v_j\, (\nu_{j,\ell_1}+ \cdots + \nu_{j,\ell_n})
\end{equation*}
($\nu^v_\emptyset = 0$). In the construction of normal forms, we shall consider, for each $d$-vector $v$,  the vector space $\mathcal{V}(v)$ of all $d$-vectors $u$ for which $\nu^u_w = 0$ whenever
$\nu^v_w= 0$, $w\in\W$. This vector space
 is useful to describe {\em resonances}, as we now illustrate in a particular case. In the situation described in Example 3, assume that $d=2$. The components $v_1$ and $v_2$ are the frequencies $\omega_1$, $\omega_2$, of the unperturbed motion, the letters are of the form
$\bk\in\Z^2$ and $\nu_w^v = i (s(w)_1 v_1+s(w)_2 v_2)$, where $s(w)_1\in\Z$ and $s(w)_2\in \Z$ denote the first and second components of the sum $s(w)\in \Z^2$ of the letters of the word
$w$. If $v_1$ and $v_2$ are rationally independent then $\nu_w^v= 0$ if and only if $s(w)_1 = s(w)_2 = 0$ and $\mathcal{V}(v)$ comprises all $2$-vectors. However if $v_2\neq 0$ and the quotient
$v_1/v_2$ is a rational number $p/q$, then $\nu_w^v= 0$ if and only if $(s(w)_1,  s(w)_2)$ is proportional to $(q,-p)$ and $\mathcal{V}(v)$ is the one-dimensional subspace spanned by
$(p,q)$. Generally speaking,  the presence of resonances results in  a decrease of the dimension of $\mathcal{V}(v)$; this in turn implies that in Section~\ref{s:normal} fewer invariant quantities will exist.

\begin{remark}\em
The relations $[g_j,f_\ell] = \nu_{j,\ell} f_\ell$ and $[g_k,f_\ell] = \nu_{k,\ell} f_\ell$ imply that for each pair of complex numbers $c_j$, $c_k$
$$[c_jg_j+c_kg_k,f_\ell] =
(c_j\nu_{j,\ell}+c_k\nu_{k,\ell})f_\ell.$$  It is then clear that if Assumption~\ref{ass:2} holds for the Abelian Lie algebra spanned by the $g_j$, $j=1,\dots,d$, then it also holds for all its linear subspaces (equivalently for all its Abelian Lie subalgebras). Moving to a subspace/subalgebra in Section~\ref{s:normal} will in general simplify the computations required to find normal forms at the expense of reducing the number of linearly independent invariant quantities. An instance of the possibility of moving to a subspace appeared above In Example 2.
\end{remark}

\section{Extended word series}

\subsection{The definition of extended word series}

The material in the preceding section suggests the following definition.

\begin{definition} Given the commuting vector fields $g_j$, $j = 1,\dots,d$, and the family of vector fields $f_\ell$, $\ell\in A$, with each $(v,\delta) \in \C^d\times \C^\W$ we associate its {\em extended word series:}
\[
\overline{W}_{(v,\delta)}(x) =  \varphi_{v}(W_{\delta}(x)).
\]
\end{definition}

In the particular case envisaged in Example 3 of Section~\ref{ss:examples} extended word series were introduced in \cite{words}; the definition given in this paper applies in the more general context of Assumption~\ref{ass:2}. Most of the properties of extended word series discussed in \cite{words} remain valid in the present general setting, but they require different proofs.

Now the solution of (\ref{eq:odegf})  may be compactly expressed as $x(t) = \overline W_{(tv,\alpha(t))}(x_0)$.

\subsection{The operators $\Xi_v$ and $\xi_v$}

For each $d$-vector $v$, we shall use  the linear operator  $\Xi_{v}$  in $\C^\W$ that maps each $\delta \in \C^\W$ into the element of $\C^\W$ defined by
\begin{equation*}%\label{eq:Xi}
(\Xi_{v}\delta)_w =\exp(\nu^v_w)\: \delta_w.
\end{equation*}
Similarly the linear operator $\xi_{v}$ on $\C^{\W}$ is defined by
\begin{equation*}
(\xi_{v} \delta)_{w} = \nu^v_w\: \delta_{w}.
\end{equation*}
Thus $\Xi_{v}$ and $\xi_{v}$ are  {\em diagonal} operators with eigenvalues $\exp(\nu^v_{w})$ and
 $\nu^v_w$ respectively.
Observe that
\begin{equation*}
 \frac{d}{dt} \Xi_{t v} =
 \Xi_{t v} \xi_{v} = \xi_{v}\Xi_{t v}.
 \end{equation*}
 The operator $\Xi_v$ maps $\G$ into $\G$ and $\g$ into $\g$. Furthermore $\Xi_v$ is a homomorphism for the convolution product:
 $\Xi_{v}(\gamma \star \delta) = (\Xi_{v} \gamma) \star (\Xi_{v} \delta)$ if $\gamma, \delta\in \C^{\W}$. This implies that
 $\exp_\star(\Xi_v\beta)=\Xi_v \exp_\star(\beta)$ for $\beta\in\g$ and that $\xi_v$ is a derivation: $\xi_v(\delta\star\delta^\prime) =
 (\xi_v \delta)\star\delta^\prime + \delta\star(\xi_v \delta^\prime)$ for $\delta,\delta^\prime\in\C^\W$.

The formulae in the next proposition will be used later.
 \begin{proposition}
\label{prop:2}
For each $\beta \in \g$, $v \in \C^d$,
\begin{align}
\label{eq:l2c}
[g^v,W_{\beta}](x) &= W_{\xi_v \beta}(x),\\
\label{eq:l2b}
\varphi^\prime_{v}(x)^{-1}   W_{\beta}(\varphi_{v}(x)) &=    W_{\Xi_{v}\beta}(x).
\end{align}
Furthermore, for each
$\gamma \in \G$, $v \in \C^d$,
\begin{equation}
\label{eq:l2a}
W_{\gamma}(\varphi_{v}(x)) =  \varphi_{v}(W_{\Xi_{v}\gamma}(x)),
\end{equation}
and
\begin{equation}\label{eq:12d}
(W^\prime_\gamma(x))^{-1} g^v(W_\gamma(x)) =  g^v(x) - (W^\prime_\gamma(x))^{-1} W_{\xi_v\gamma}(x).
\end{equation}
\end{proposition}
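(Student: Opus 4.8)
Plan. All four identities rest on a single computation describing how the operator $[g^v,\cdot]$ (Lie--Jacobi bracket with $g^v$) acts on the Lie elements out of which the series $W_\beta$, $\beta\in\g$, are built. First I would record the base relation: since $g^v=\sum_j v_j g_j$ and $[g_j,f_\ell]=\nu_{j,\ell}f_\ell$, linearity gives $[g^v,f_\ell]=\nu^v_\ell f_\ell$ for each letter $\ell$. The key lemma I would then establish is that for every nonempty word $w=\ell_1\cdots\ell_n$ the iterated commutator $E_w:=[[\cdots[f_{\ell_1},f_{\ell_2}],\dots],f_{\ell_n}]$ occurring in the Dynkin--Specht--Wever formula (\ref{eq:dynkin}) satisfies $[g^v,E_w]=\nu^v_w E_w$. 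This is proved by induction on $n$: the case $n=1$ is the base relation, and the inductive step writes $E_{w\ell}=[E_w,f_\ell]$ and uses the Jacobi identity (that $[g^v,\cdot]$ is a derivation of the Lie--Jacobi bracket) together with $\nu^v_{w\ell}=\nu^v_w+\nu^v_\ell$.

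Here lies the main obstacle, and the reason the detour through $E_w$ is unavoidable. One is tempted instead to prove the termwise identity $[g^v,f_w]=\nu^v_w f_w$ for the word basis functions and then sum against $\beta$. This is \emph{false} as soon as $g^v$ is not affine: differentiating $[g^v,f_\ell]=\nu^v_\ell f_\ell$ and using the recursion $f_{\ell\ell}=f'_\ell f_\ell$ produces a spurious term proportional to the Hessian of $g^v$ (in one dimension one finds $[g^v,f_{\ell\ell}]=2\nu^v_\ell f_{\ell\ell}+(g^v)''\,f_\ell^2$). The shuffle relations defining $\g$ are precisely what make these curvature terms cancel in the sum, and the clean way to see the cancellation is to pass to the commutator representation (\ref{eq:dynkin}), on which only the derivation property of $[g^v,\cdot]$, never the second derivatives of $g^v$, is ever used.

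Granting the lemma, identity (\ref{eq:l2c}) follows by applying $[g^v,\cdot]$ term by term in (\ref{eq:dynkin}) and comparing with the same expansion of $W_{\xi_v\beta}$, after noting that $\xi_v$ preserves $\g$ (all words occurring in a fixed shuffle $w\sh w'$ share the same value $\nu^v_w+\nu^v_{w'}$, so the defining relations are rescaled rather than broken). For (\ref{eq:l2b}) I would fix $x$ and integrate along the flow $\varphi_{sv}$ of $g^v$: the computation in the proof of Proposition~\ref{prop:1}, carried out with $E_w$ in place of $f_\ell$, shows that $s\mapsto \varphi_{sv}'(x)^{-1}E_w(\varphi_{sv}(x))$ solves $\tfrac{d}{ds}Y=\nu^v_w Y$, hence equals $e^{s\nu^v_w}E_w(x)$; evaluating at $s=1$ and summing via (\ref{eq:dynkin}) (using that $\Xi_v$ preserves $\g$) gives (\ref{eq:l2b}).

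The last two identities follow from (\ref{eq:l2b}). For (\ref{eq:l2a}) I would write $\gamma=\exp_\star(\beta)$ with $\beta\in\g$, so that $\tau\mapsto W_{\exp_\star(\tau\beta)}$ is the flow of $W_\beta$ and $\tau\mapsto W_{\exp_\star(\tau\Xi_v\beta)}$ that of $W_{\Xi_v\beta}$. Since (\ref{eq:l2b}) says that $\varphi_v$ pulls $W_\beta$ back to $W_{\Xi_v\beta}$, it must conjugate their flows: both $\tau\mapsto W_{\exp_\star(\tau\beta)}(\varphi_v(x))$ and $\tau\mapsto \varphi_v(W_{\exp_\star(\tau\Xi_v\beta)}(x))$ solve $\tfrac{d}{d\tau}z=W_\beta(z)$ with the same initial value $\varphi_v(x)$ (the second using (\ref{eq:l2b}) in the form $\varphi_v'(\cdot)W_{\Xi_v\beta}(\cdot)=W_\beta(\varphi_v(\cdot))$), and uniqueness at $\tau=1$ yields (\ref{eq:l2a}). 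Finally, (\ref{eq:12d}) comes from differentiating (\ref{eq:l2a}), which holds verbatim with $v$ replaced by $tv$, in $t$ at $t=0$: the left side gives $W_\gamma'(x)\,g^v(x)$, while the product rule on the right, together with $\tfrac{d}{dt}\Xi_{tv}=\xi_v\Xi_{tv}$ and $\Xi_0=\mathrm{id}$, gives $g^v(W_\gamma(x))+W_{\xi_v\gamma}(x)$; rearranging and multiplying by $W_\gamma'(x)^{-1}$ is exactly (\ref{eq:12d}).
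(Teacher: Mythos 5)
Your proposal is correct and follows essentially the same route as the paper: the Jacobi-identity induction showing each iterated commutator is an eigenvector of $[g^v,\cdot]$, the Dynkin--Specht--Wever formula for (\ref{eq:l2c}), Proposition~\ref{prop:1} applied along the flow for (\ref{eq:l2b}), conjugation of flows via $\gamma=\exp_\star(\beta)$ for (\ref{eq:l2a}), and differentiation of (\ref{eq:l2a}) with $tv$ at $t=0$ for (\ref{eq:12d}). Your added observation that the termwise identity $[g^v,f_w]=\nu^v_w f_w$ fails for the word basis functions themselves (hence the necessity of passing to the commutator representation) is accurate and a worthwhile clarification, but it does not change the argument.
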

\begin{proof}  From the Jacobi identity and (\ref{eq:1})
\[
[g^v, [f_{\ell_1}, f_{\ell_2}]] = (\nu^v_{\ell_1}+\nu^v_{\ell_2}) [f_{\ell_1}, f_{\ell_2}] = \nu^v_{\ell_1\ell_2} [f_{\ell_1}, f_{\ell_2}],
\]
and, by induction, for the iterated commutator
\[
I _{\ell_1 \cdots\ell_n}=[[\cdots[[f_{\ell_1},f_{\ell_2}],f_{\ell_3}]\cdots],f_{\ell_n}]
\]
we find
\[
[g^v, I_{\ell_1\cdots\ell_n}] = \nu^v_{\ell_1\cdots\ell_n}I _{\ell_1 \cdots\ell_n}.
\]
Therefore (\ref{eq:l2c}) is a consequence of (\ref{eq:dynkin}). Proposition~\ref{prop:1} then establishes (\ref{eq:l2b}).

From
(\ref{eq:l2b}), the change of variables $x=\varphi_v(X)$ pulls back the vector field $W_\beta$ into the vector field $W_{\Xi_v \beta}$; for the corresponding  flows at $t=1$ we then have $W_{\exp_\star(\beta)}\circ \varphi_v = \varphi_v\circ W_{\exp_\star(\Xi_v\beta)}$, where, as pointed out above,
$W_{\exp_\star(\Xi_v\beta)}= W_{\Xi_v\exp_\star(\beta)}$. Since all elements $\gamma\in\G$ are of the form $\exp_\star(\beta)$, $\beta\in\g$, we have proved (\ref{eq:l2a}).

To obtain (\ref{eq:12d}), write (\ref{eq:l2a}) with $tv$ replacing $v$, differentiate with respect to $t$ and evaluate at $t=0$.
\end{proof}

\subsection{The group $\overline{\G}$}

The symbol $\overline{\G}$ denotes the set  $\C^d \times \G$.
For each $t$, the
solution coefficients $(tv,\alpha(t))$ found above provide an example of element of $\overline{\G}$.
For $(u,\gamma)\in\overline{\G}$ and $(v,\delta)\in\C^d\times \C^\W$ we set
\begin{equation*}
(u,\gamma) \bigstar(v,\delta) = (
v+\delta_\emptyset u,
\gamma \star \Xi_{u} \delta)\in \C^d\times \C^\W.
\end{equation*}
For this operation $\overline{\G}$ is a noncommutative group;
$(\C^d,\uno)$ and $(0,\G)$ are subgroups of $\overline{\G}$. The unit of $\overline{\G}$ is $ \overline{\uno} = (0,\uno)$.
Note that, for each $(u,\gamma) \in \overline{\G}$,
\begin{equation*}
(u,\gamma) = (0,\gamma) \bigstar (u,\uno), \qquad (u,\uno) \bigstar (0,\gamma) = (u,\Xi_{u} \gamma).
\end{equation*}
In particular $(0,\gamma)$ and  $(u,\uno)$ commute if $\Xi_u$ leaves $\gamma$ invariant.

By using (\ref{eq:act}) and (\ref{eq:l2a}), it is a simple exercise to check that
\begin{equation*}%\label{eq:act2}
\overline{W}_{(v,\delta)}\big(\overline{W}_{(u,\gamma)}(x)\big) =
\overline{W}_{(u,\gamma)\bigstar (v,\delta)}(x), \qquad \gamma, \delta\in{\G}.
\end{equation*}
In fact  the operation $\bigstar$ has been defined so as to ensure this composition rule.

\subsection{The Lie algebra $\overline{\g}$}
\label{sss:tlag}

As a set, the Lie algebra $\overline{\g}$ of the group $\overline{\G}$ consists of the elements
$(v,\beta)\in\C^d \times\g$;  by differentiating curves in $\overline{\G}$, it is found that the Lie bracket of two elements in $\overline{\g}$  has the expression
\begin{equation*}
[(v,\beta),(u,\eta))] =
%s \, \xi \eta - r\,  \xi \delta + \delta \star \eta -\eta \star \delta \in \g.
(0,\xi_{v} \eta - \xi_{u} \beta +  [\beta,	\eta]).
\end{equation*}
With each element $(v,\beta)$ of $\overline{\g}$ we associate the  vector field
$
  g^v(x) + {W}_{\beta}(x)
$.
From (\ref{eq:l2c}) in Proposition~\ref{prop:2} we find
for arbitrary  $(v,\beta),(u,\eta) \in \overline{\g}$,
\begin{equation*}
[g^v + {W}_{\beta},g^u + {W}_{\eta}] = {W}_{[(v,\beta),(u,\eta)]},
\end{equation*}
and therefore the correspondence between $\overline{\g}$ and vector fields preserves the Lie algebra structure.

\subsection{Differential equations in $\overline{\G}$}
\label{sss:eqnsoverlineG}
Consider the initial value problem
\begin{equation}
\label{eq:odebetab}
\frac{d}{dt} x(t) = g^v(x(t)) + W_{\beta(t)}(x(t)),\qquad x(0) =x_0,
\end{equation}
where $\beta(t)\in\g$ for each $t$, and $v \in \C^d$.
By using Proposition~\ref{prop:2}, it is found that a time-dependent change of variables
$x = \varphi_{t v}(z) = \overline{W}_{(tv,\uno)}(z)$,  transforms the problem into
$$
\frac{d}{dt} z(t) =  W_{\Xi_{t v} \beta (t)}(z(t)), \qquad z(0) = x_0,
$$
and thus, (\ref{eq:odebetab}) may be  solved as $x(t) = \varphi_{t v}(W_{\alpha(t)}(x))$, ie
$x(t)=\overline{W}_{(t v,\alpha(t))}(x_0)$, where $\alpha(t)$ is the solution of the initial value problem in $\G$
\begin{equation*}
\frac{d}{dt} \alpha(t) = \alpha(t) \star \Xi_{t v} \beta(t), \quad \alpha(0) = \uno.
\end{equation*}
This may be integrated  as we integrated (\ref{eq:odebeta}). It is of interest to point out that, after using the definition of $\bigstar$, it is easily checked that the function of $t$ $(tv,\alpha(t))$ found in this way is the solution of the following nonautonomous initial value problem in $\overline{\G}$
\[
\frac{d}{dt}(u, \alpha) =(u, \alpha) \bigstar (v,  \beta(t)), \quad (u(0),\alpha(0)) = \overline{\uno};
\]
this, in turn, is clearly analogous to the problem (\ref{eq:odebeta}) in $\G$.

By means of (\ref{eq:12d}) it may easily be shown that a change of variables
$x = \overline{W}_{(u,\kappa)}(X)$, $\kappa\in\G$,  transforms the differential equation in (\ref{eq:odebetab}) into
$$
\frac{d}{dt} X(t) =g^v(X(t)) +  W_{B(t)}(X(t)), ,
$$
where  $B(t)$ is determined from
\begin{equation*}
B(t) = \kappa *(\Xi_u\beta(t))*\kappa^{-1} - (\xi_{v} \kappa)*\kappa^{-1}.
\end{equation*}

The following result, to be used later, is easily checked and corresponds to the well-known fact that changes variables commute with the computation of Lie brackets of vector fields.
\begin{proposition}
\label{l:LieBracketInvariance}
Assume that $(u,\kappa)$ is an element in the group $\overline{\G}$. Let the elements $(v^1,\Delta^1),(v^2,\Delta^2),(v^3,\Delta^3) \in \overline{\g}$ be related to
the elements $(v^1,\delta^1),(v^2,\delta^2),(v^3,\delta^3) \in \overline{\g}$
through
\begin{equation*}
\Delta^j = \kappa \star (\Xi_{u}\delta^j ) \star \kappa^{-1} - ( \xi_{v^j} \kappa) \star \kappa^{-1}, \quad j =1,2,3.
\end{equation*}
Then $[(v^1,\Delta^1),(v^2,\Delta^2)]=(v^3,\Delta^3)$ if and only if $[(v^1,\delta^1),(v^2,\delta^2)]=(v^3,\delta^3)$.
\end{proposition}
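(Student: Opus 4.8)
The plan is to exploit the fact, stated just before the proposition, that the change of variables $x = \overline{W}_{(u,\kappa)}(X)$ transforms a vector field of the form $g^{v}+W_{\delta}$ (with $(v,\delta)\in\overline{\g}$) into the vector field $g^{v}+W_{\Delta}$, where
\[
\Delta = \kappa \star (\Xi_{u}\delta)\star \kappa^{-1} - (\xi_{v}\kappa)\star\kappa^{-1}.
\]
This is precisely the relation assumed in the statement, applied with $\delta=\delta^{j}$, $v=v^{j}$. So the geometric content is that, writing $\Phi=\overline{W}_{(u,\kappa)}$, the pull-back under $\Phi$ sends the vector field $V_{j}:=g^{v^{j}}+W_{\delta^{j}}$ to the vector field $g^{v^{j}}+W_{\Delta^{j}}$. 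Since $\Phi$ is an (invertible, formal) diffeomorphism, pull-back is a Lie-algebra homomorphism on vector fields: $\Phi^{*}[V_{1},V_{2}]=[\Phi^{*}V_{1},\Phi^{*}V_{2}]$. This is the well-known fact alluded to in the sentence preceding the statement, so I would invoke it directly rather than reprove it.

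The key steps I would carry out are the following. First, recall from Subsection~\ref{sss:tlag} that the correspondence $(w,\gamma)\mapsto g^{w}+W_{\gamma}$ from $\overline{\g}$ to vector fields preserves the Lie bracket, i.e.\ $[g^{v^{1}}+W_{\delta^{1}},\,g^{v^{2}}+W_{\delta^{2}}]=W_{\eta}$, where $(v',\eta)=[(v^{1},\delta^{1}),(v^{2},\delta^{2})]$ and necessarily $v'=0$ (the first component of any bracket in $\overline{\g}$ vanishes). Thus the statement $[(v^{1},\delta^{1}),(v^{2},\delta^{2})]=(v^{3},\delta^{3})$ forces $v^{3}=0$ and is equivalent to the vector-field identity $[V_{1},V_{2}]=W_{\delta^{3}}$; likewise $[(v^{1},\Delta^{1}),(v^{2},\Delta^{2})]=(v^{3},\Delta^{3})$ forces the same $v^{3}=0$ and is equivalent to $[\Phi^{*}V_{1},\Phi^{*}V_{2}]=W_{\Delta^{3}}$. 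Second, apply the pull-back homomorphism: $[\Phi^{*}V_{1},\Phi^{*}V_{2}]=\Phi^{*}[V_{1},V_{2}]$. Third, observe that the vector field $W_{\delta^{3}}$ (the bracket of $V_{1}$ and $V_{2}$, an element of $\g$ with zero $g$-part) is transformed by $\Phi$ exactly as the formula in the statement prescribes, namely into $W_{\Delta^{3}}$ with $\Delta^{3}=\kappa\star(\Xi_{u}\delta^{3})\star\kappa^{-1}-(\xi_{v^{3}}\kappa)\star\kappa^{-1}$, where $v^{3}=0$ so the second term drops. Chaining these, $[V_{1},V_{2}]=W_{\delta^{3}}$ holds if and only if its pull-back $[\Phi^{*}V_{1},\Phi^{*}V_{2}]=W_{\Delta^{3}}$ holds, which is the desired equivalence.

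The one point requiring care—the main obstacle—is the bookkeeping of the first ($\C^{d}$) components. One must check that the three triples genuinely share the same vectors $v^{1},v^{2},v^{3}$ on both sides (as the statement's indexing already presupposes) and that the bracket in $\overline{\g}$ always returns a zero first component, so that applying the transformation formula with $v^{3}=0$ correctly reduces to $\Delta^{3}=\kappa\star(\Xi_{u}\delta^{3})\star\kappa^{-1}$ with no $\xi_{v^{3}}$-correction. Given that the first-component map $(w,\gamma)\mapsto w$ is a bracket-killing projection on $\overline{\g}$, this is consistent, but it is the spot where a reader expects an explicit word; a clean alternative is to verify the claim purely algebraically by substituting the three defining relations into the bracket formula $[(v^{1},\Delta^{1}),(v^{2},\Delta^{2})]=(0,\xi_{v^{1}}\Delta^{2}-\xi_{v^{2}}\Delta^{1}+[\Delta^{1},\Delta^{2}])$ and using that $\Xi_{u}$ is a $\star$-homomorphism together with $\xi_{v}(\kappa\star\eta\star\kappa^{-1})=\kappa\star(\xi_{v}\eta)\star\kappa^{-1}+[(\xi_{v}\kappa)\star\kappa^{-1},\,\kappa\star\eta\star\kappa^{-1}]$, reducing everything to the corresponding identity for the $\delta^{j}$. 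I would prefer the conceptual pull-back argument and relegate this algebraic cross-check to a remark, since the geometric statement makes the equivalence transparent.
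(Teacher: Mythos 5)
Your preferred route---the geometric pull-back argument---has a gap at the step where you declare the coefficient identity $[(v^{1},\delta^{1}),(v^{2},\delta^{2})]=(v^{3},\delta^{3})$ \emph{equivalent to} the vector-field identity $[V_{1},V_{2}]=W_{\delta^{3}}$. The correspondence $(w,\gamma)\mapsto g^{w}+W_{\gamma}$ of Section~\ref{sss:tlag} is a Lie-algebra homomorphism but not, in general, an isomorphism: for a particular choice of the fields $f_{\ell}$ the word basis functions (equivalently, the iterated commutators in (\ref{eq:dynkin})) may be linearly dependent, or even all vanish, so from an equality $W_{\eta}=W_{\delta^{3}}$ of vector fields one cannot conclude $\eta=\delta^{3}$ in $\C^{\W}$. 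The proposition is a universal statement about families of complex numbers, and a proof that passes through one particular realization as vector fields delivers only one implication. To rescue the geometric argument you would need to observe that it suffices to verify the identity for a single \emph{faithful} realization (e.g.\ one in which the $f_{\ell}$ generate a free Lie algebra), and you do not say this.

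The algebraic cross-check you relegate to a remark is in fact the argument the paper intends when it says the result ``is easily checked'' (the paper offers no written proof, only the remark that the statement \emph{corresponds to} the invariance of Lie brackets under changes of variables---motivation, not proof). That computation is complete: substitute the defining relations into $[(v^{1},\Delta^{1}),(v^{2},\Delta^{2})]=(0,\xi_{v^{1}}\Delta^{2}-\xi_{v^{2}}\Delta^{1}+[\Delta^{1},\Delta^{2}])$, use that $\Xi_{u}$ is a $\star$-homomorphism commuting with each $\xi_{v}$ and your (correct) derivation identity for $\xi_{v}(\kappa\star\eta\star\kappa^{-1})$, and reduce to the corresponding identity in the $\delta^{j}$; the ``if and only if'' then follows because $\delta\mapsto\kappa\star(\Xi_{u}\delta)\star\kappa^{-1}-(\xi_{v}\kappa)\star\kappa^{-1}$ is an invertible affine map. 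Your bookkeeping of the first components (the bracket in $\overline{\g}$ always has vanishing first component, so the $\xi_{v^{3}}$-term drops when $v^{3}=0$ and the equivalence is vacuous when $v^{3}\neq 0$) is correct. Promote that paragraph to the proof and the argument is sound.
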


\subsection{The exponential of an element in $\overline{\g}$}

The exponential of an element $(v,\beta) \in \overline{\g}$ is, according to the preceding material, $(v,\alpha(1))$, where
$\alpha(t)$ is found by solving
\begin{equation}\label{eq:odexibeta}
\frac{d}{dt} \alpha = \alpha \star \Xi_{tv} \beta,\qquad \alpha(0) = \uno.
\end{equation}
At variance with the case of the group $\G$ and its Lie algebra $\g$, where the exponential is a bijection from the Lie algebra to the group, there are elements in $\overline{\G}$ that are not the exponential of an element in $\overline{\g}$. However, inverting the exponential for a given $(v,\gamma) \in \overline{\G}$ is always possible provided that $v$ is such that $\exp(\nu_w^{v})\neq 1$ for each word $w \in \W$ for  which $\nu_w^{v}\neq 0$.

\begin{proposition}
\label{th:logarithm}
 Given $v \in \C^d$ such that
\begin{equation}
\label{eq:non-resonance-for-log}
\nu_w^{v} \neq  2 k \pi\, i \quad \mbox{for all} \quad k \in \Z\backslash\{0\} \quad \mbox{and}\quad w \in \W,
\end{equation}
 the restriction of the exponential to the set
$
 \{(v,\beta)\ :  \  \beta \in \g\} \subset \overline{\g}
$
is injective, and gives a bijection from this set to
$
\{(v,\gamma)\ :  \  \gamma \in \G\} \subset \overline{\G}$.
\end{proposition}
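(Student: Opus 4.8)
The plan is to fix $v$ and exploit that the exponential preserves the first component. Writing $\exp(v,\beta)=(v,\alpha(1))$ with $\alpha$ the solution of $(d/dt)\alpha=\alpha\star\Xi_{tv}\beta$, $\alpha(0)=\uno$, the $\C^d$-component stays equal to $v$ because $\beta_\emptyset=0$ for $\beta\in\g$. Everything thus reduces to showing that $F_v\colon\beta\mapsto\alpha(1)$ is a bijection from $\g$ onto $\G$. First I would record the triangular structure in word length: restricting the equation to a word $w=\ell_1\cdots\ell_n$ gives $(d/dt)\alpha_w=e^{t\nu^v_w}\beta_w+R_w(t)$, where $R_w(t)$ depends only on the components $\alpha_{w'},\beta_{w'}$ with $|w'|<n$; integrating and using $\alpha_w(0)=0$ yields $\alpha_w(1)=c_w\,\beta_w+\int_0^1 R_w(t)\,dt$ with $c_w=\int_0^1 e^{t\nu^v_w}\,dt$.

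The constant $c_w$ never vanishes under hypothesis (\ref{eq:non-resonance-for-log}): if $\nu^v_w=0$ then $c_w=1$, while if $\nu^v_w\neq0$ then $c_w=(e^{\nu^v_w}-1)/\nu^v_w$, which is zero only when $\nu^v_w\in 2\pi i\,\Z$, excluded since $\nu^v_w\neq0$. Hence $\beta_w=c_w^{-1}\big(\alpha_w(1)-\int_0^1R_w\,dt\big)$ determines $\beta$ uniquely from $\alpha(1)$, solving for the $\beta_w$ in order of increasing $|w|$. This already gives injectivity of the exponential on $\{(v,\beta):\beta\in\g\}$ and shows that each prescribed $(v,\gamma)$ has exactly one preimage $(v,\beta)$ with $\beta\in\C^\W$, $\beta_\emptyset=0$.

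It remains to prove surjectivity onto $\{(v,\gamma):\gamma\in\G\}$, i.e.\ that the $\beta$ produced by the recursion from a group-like $\gamma$ is itself in $\g$. The inclusion $\exp(\{v\}\times\g)\subseteq\{v\}\times\G$ is already available: for $\beta\in\g$ one has $\Xi_{tv}\beta\in\g$, so the curve $\alpha(t)$ stays in $\G$ by regularity of $\G$, whence $\alpha(1)\in\G$. The converse inclusion is the crux, and the step I expect to be the main obstacle, since the recursion matches only the endpoint $\alpha(1)=\gamma$ and a priori says nothing about whether $\beta$ is primitive.

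To settle it I would run a second induction on word length, on the group-like defect. For words $u,u'$ set $D_{u,u'}(\delta)=\delta_u\delta_{u'}-\sum_j\delta_{w_j}$, where $u\sh u'=\sum_j w_j$; thus (given $\delta_\emptyset=1$) $\delta\in\G$ iff all $D_{u,u'}(\delta)$ vanish, and $\beta\in\g$ iff $\beta_\emptyset=0$ and $\sum_j\beta_{w_j}=0$ for all nonempty $u,u'$. Using the compatibility between the deconcatenation product $\star$ and the shuffle (the same bialgebra identity that makes $\G$ a group), differentiating $D_{u,u'}(\alpha(t))$ along the flow gives a linear differential equation whose inhomogeneous term, at top order in $|u|+|u'|$, equals (up to sign) $e^{t\nu^v_{uu'}}\sum_j\beta_{w_j}$, since all $w_j$ share the weight $\nu^v_{uu'}=\nu^v_u+\nu^v_{u'}$; the remaining terms are combinations of defects $D_{\tilde u,\tilde u'}(\alpha(t))$ with $|\tilde u|+|\tilde u'|$ strictly smaller, and there is no $D_{u,u'}(\alpha)$ term because $(\Xi_{tv}\beta)_\emptyset=\beta_\emptyset=0$. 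Defects with an empty argument vanish identically, giving the base case; assuming inductively that $D_{\tilde u,\tilde u'}(\alpha(t))\equiv0$ for shorter pairs, the equation integrates to $D_{u,u'}(\alpha(1))=\pm\big(\int_0^1 e^{t\nu^v_{uu'}}dt\big)\sum_j\beta_{w_j}$. As $\alpha(1)=\gamma\in\G$ the left side is $0$ and the integral is a nonzero constant as above, forcing $\sum_j\beta_{w_j}=0$; this is primitivity of $\beta$ at the current level, and it also makes $D_{u,u'}(\alpha(t))\equiv0$, closing the induction. Hence $\beta\in\g$ and $F_v$ maps $\g$ onto $\G$, completing the proof. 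Alternatively, surjectivity could be obtained more softly by truncating at each word length and invoking the Ax--Grothendieck theorem for the injective polynomial self-map $\exp_\star^{-1}\circ F_v$ of the affine space $\g_{\le N}$, but the defect recursion is more self-contained and additionally shows that the whole solution curve remains in $\G$.
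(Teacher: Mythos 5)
Your core argument is exactly the paper's proof: solve the triangular recursion $\gamma_w=\bigl(\int_0^1 e^{t\nu^v_w}\,dt\bigr)\beta_w+\cdots$ in order of increasing word length, observing that the integral equals $1$ when $\nu^v_w=0$ and $(e^{\nu^v_w}-1)/\nu^v_w\neq 0$ otherwise under (\ref{eq:non-resonance-for-log}). Where you go beyond the paper is in the last paragraph: the paper's proof stops at the recursion and leaves implicit the fact that the $\beta$ so constructed actually lies in $\g$ (it only guarantees a unique preimage with $\beta_\emptyset=0$), whereas you close this gap with the induction on the shuffle defects $D_{u,u'}(\alpha(t))$. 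That argument is sound; the only imprecision is your claim that the remaining terms in $\frac{d}{dt}D_{u,u'}(\alpha)$ are combinations of lower-order defects of $\alpha$ alone --- expanding $(\alpha\otimes\Xi_{tv}\beta)(\Delta u\sh\Delta u')$ also produces lower-order primitivity defects of $\beta$ multiplied by components of $\alpha$ --- but since your induction establishes both $D_{\tilde u,\tilde u'}(\alpha(t))\equiv 0$ and $\sum_j\beta_{w_j}=0$ level by level, these terms are absorbed and the conclusion stands. In short: same method as the paper, with a correct and worthwhile completion of a step the paper glosses over.
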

\begin{proof} Given $\gamma\in\G$, in order to find the logarithm of $(v,\gamma)$ we have to determine $\beta\in\g$ such that $\alpha(1) = \gamma$, where $\alpha$ satisfies
(\ref{eq:odexibeta}).
Assume that the values of  $\beta$ at all words with $<n$ letters have already been determined  and choose $w\in \W_n$. Then
\begin{equation*}
\frac{d}{dt} \alpha_w = \Big(\Xi_{tv} \beta\Big)_w + \dots = \exp(t\nu^{v}_w)\beta_w+\cdots,
\end{equation*}
where the dots stand for terms involving values of $\beta$ at words with $<n$ letters. Integration leads to an equation
\begin{equation*}
\gamma_w = \int_0^1 \exp(t\nu^{v}_w)\,dt\: \beta_w +\cdots
\end{equation*}
that may be solved uniquely for $\beta_w$ because, under the hypothesis of the proposition, the integral does not vanish.
\end{proof}

\subsection{Perturbed Hamiltonian problems}
If for each $j$, $g_j(x)$ is a Hamiltonian vector field with Hamiltonian function $H_j(x)$, and each $f_\ell(x)$  is a Hamiltonian vector field with Hamiltonian function $H_\ell(x)$, then for each $(v,\beta)\in\overline{\g}$, the  vector field $g^v(x) + {W}_{\beta}(x)$ is  Hamiltonian, with Hamiltonian function
\begin{equation}
\label{eq:Hamseries}
\mathcal{H}_{(v,{\beta})}(x) = \sum_{j=1}^{d} v_j \, H_j(x) +
\sum_{w\in\W}{\beta}_w\, H_w(x),
\end{equation}
where the $H_w(x)$ are as in (\ref{eq:wordham}). For any $(u,\kappa)\in \overline{\G}$, the  map $x \mapsto \overline{W}_{(u,\kappa)}(x)$ is canonically symplectic. When this canonical map is used to change variables in the Hamiltonian system with Hamiltonian function $\mathcal{H}_{(v,\beta)}$ the new system is Hamiltonian and its Hamiltonian function is found by composing the old Hamiltonian with the change of variables.

As we have illustrated in Example~4 above, in some cases the following additional assumption holds:
\begin{assumption}\label{ass:ham}
The Hamiltonian functions $H_j(x)$, $j=1,\dots,d$, $H_\ell(x)$, $\ell\in A$ are such that
$$
\{H_j,H_k\}= 0,\: j\neq k,\qquad \{H_j,H_\ell\} = \nu_{j,\ell} H_\ell.
$$
\end{assumption}
Clearly this assumption implies that the Hamiltonian system satisfies Assumption~\ref{ass:2}. We point out that, in addition, each $H_j$ is a conserved quantity for the Hamiltonian flow generated by $H_k$, $k\neq j$.

Under Assumption~\ref{ass:ham},
the Poisson bracket of two   Hamiltonian functions $\mathcal{H}_{(v,{\beta})}$, $\mathcal{H}_{(v^\prime,{\beta^\prime})}$  may be expressed by means of the bracket in $\overline{\g}$ as
$\mathcal{H}_{[(v,{\beta}),(v^\prime,{\beta^\prime})]}$. Furthermore
for the change of variables we have the formula
\begin{equation*}
  \mathcal{H}_{(v,\beta)}(\overline{W}_{(u,\kappa)}(X)) = \mathcal{H}_{(v,B)}(X),
\end{equation*}
with
$$
B = \kappa \star (\Xi_{u}\beta ) \star \kappa^{-1} - ( \xi_{v} \kappa) \star \kappa^{-1}.
$$

\section{Normal forms}
\label{s:normal}

Let us  present our main results.

\subsection{Normal forms for elements in the Lie algebra $\overline{\g}$}
We now  study autonomous systems of the form
\begin{equation}\label{eq:systnormal}
\frac{d}{dt}x = g^{v}(x) +
 W_{\beta}(x),\qquad \beta \in \g.
\end{equation}
This format yields the original problem (\ref{eq:odegf})--(\ref{eq:fdec}) in the particular case where  $\beta$ is chosen as
\begin{equation}\label{eq:b}
\beta_w = 1 \quad \mbox{if}\quad w\in\W_1,\qquad \beta_w = 0 \quad\mbox{if}\quad w\notin \W_1;
\end{equation}
other choices of $\beta$ \cite{words} are of interest eg in the analysis of numerical integrators for (\ref{eq:odegf})--(\ref{eq:fdec})  in relation to the so-called modified equations \cite{ssc}, \cite{hlw}.
Our aim is to change variables $x = W_\kappa(X) = \overline{W}_{(0,\kappa)}(X)$, $\kappa\in\G$, in order to simplify
(\ref{eq:systnormal}).
According to Section~\ref{sss:eqnsoverlineG},
in the new variables we have
\begin{equation}\label{eq:systnormaltrans}
\frac{d}{dt}X = g^{v}(X) +  W_{\widehat{\beta}}(X),
\end{equation}
with
\begin{equation}\label{eq:normaleqn}
\widehat{\beta} = \kappa *\beta*\kappa^{-1} - (\xi_{v} \kappa)*\kappa^{-1}.
\end{equation}

We  choose  ($v$-dependent) elements $\widehat{\beta}\in\g$ and $\kappa\in \G$ subject to (\ref{eq:normaleqn}) and such that
$\widehat{\beta}$ is as simple as possible; then the system is said to have been brought to normal form. The maximum simplification given by $\widehat{\beta} = 0$ can only be reached if the relations $\kappa\in\G$, $\xi_v\kappa = 0$ imply $\kappa = \uno$, ie if $\nu_w^v\neq 0$ for all nonempty words $w$. If  $\nu_w^v= 0$ for some nonempty words, there are parts of the perturbation that commute with $g^{v}$ and of course those  cannot be eliminated by means of a change of variables, see eg~\cite{arnoldode}. The aim is then to get $\widehat{\beta}\in\g$ such that
$[g^{v}, W_{\widehat{\beta}}]=0$ (rather than $W_{\widehat{\beta}}=0$). In terms of the coefficients of the series we demand
$[(v,0),(0,\widehat{\beta})]=(0,\xi_{v} \widehat{\beta})=0$, ie
\begin{equation}\label{eq:condnormal}
\nu^{v}_w\:\widehat{\beta}_{w} = 0,
\end{equation}
for each nonempty word $w$. Equivalently,
$ \widehat{\beta}_w = 0$  for all words $w$ such that $\nu^{v}_{w} \neq 0$.

We have the following result:

\begin{theorem}
\label{th:theoremnormal}
Given $(v,\beta) \in \overline{\g}$, there exists  $\kappa\in\G$ such that the element $\widehat{\beta}\in {\g}$ defined in (\ref{eq:normaleqn}) satisfies that $[(v,0),(0,\widehat{\beta})] = (0,\xi_{v} \widehat{\beta})=0$. In addition $(0,\widehat{\beta})$ commutes with all elements $(u,0)$, $u\in \mathcal{V}(v)$.

Therefore, the change of variables $x = \overline{W}_\kappa(X)$,  reduces the system (\ref{eq:systnormal})
to the normal form (\ref{eq:systnormaltrans}) in such a way that the vector fields
$g^{v}(X)$ and ${W}_{\widehat \beta}(X)$ commute and the solutions of (\ref{eq:systnormaltrans}) satisfy
\[
X(t) =  \varphi_{t {v}} (\hat \varphi_t(X(0))) = \hat\varphi_t (\varphi_{t {v}}(X(0))),
\]
where $\hat \varphi_t$ is the solution flow of the system $(d/dt)X = W_{\widehat{\beta}}(X)$. Moreover,  the vector field
  $W_{\widehat \beta}(X)$ commutes with $g^{u}(X)$ for all vectors $u\in\mathcal{V}(v)$.

Solutions of (\ref{eq:systnormaltrans})  possess the extended word series representation
$$X(t) = \overline{W}_{(t {v},\widehat{\alpha}(t))}(X(0)),$$where
\[
(t {v},\widehat{\alpha}(t)) = (t{v}, \exp_\star(t\widehat{\beta})) = (0,\exp_\star(t\widehat{\beta})) \bigstar (t{v},\uno)
=
(t{v},\uno) \bigstar  (0,\exp_\star(t\widehat{\beta})).
\]

\end{theorem}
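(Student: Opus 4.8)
The plan is to construct the transformation $\kappa$ recursively, one word length at a time, exploiting the fact that $\g$ and $\G$ are filtered by the length of words. Writing (\ref{eq:normaleqn}) as $\widehat{\beta}\star\kappa = \kappa\star\beta - \xi_{v}\kappa$ and extracting the coefficient of a word $w=\ell_1\cdots\ell_n$, the normalizations $\kappa_\emptyset=1$ and $\beta_\emptyset=\widehat\beta_\emptyset=0$ isolate the length-$n$ unknowns in the scalar equation $\widehat{\beta}_w + \nu^v_w\,\kappa_w = \beta_w + R_w$, where $R_w$ depends only on the already-determined values of $\kappa$, $\beta$ and $\widehat\beta$ at words shorter than $n$. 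At each word I would then make the choice that annihilates the non-resonant part: if $\nu^v_w\neq 0$ set $\kappa_w = (\beta_w+R_w)/\nu^v_w$, so that $\widehat\beta_w=0$; if $\nu^v_w=0$ set $\kappa_w=0$ and $\widehat\beta_w=\beta_w+R_w$. By construction $\widehat\beta$ is then supported on the resonant words ($\nu^v_w=0$), which is exactly the condition $\xi_v\widehat\beta=0$ of (\ref{eq:condnormal}). More transparently, I would realize this recursion as a sequence of elementary changes of variables $\kappa^{(n)}=\exp_\star(\sigma_n)$ with $\sigma_n\in\g$ supported on length-$n$ words: since the convolution bracket $[\sigma_n,\cdot]$ strictly increases word length, such a transformation leaves all coefficients of length $<n$ untouched and alters those of length $n$ only through the term $-\xi_v\sigma_n$, so $\sigma_n$ can be chosen to remove the non-resonant length-$n$ part; composing the $\kappa^{(n)}$ (the product converges in the length filtration) yields a single $\kappa\in\G$.

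The step I expect to be the main obstacle is showing that the quantities produced by this recursion genuinely lie in $\g$ and $\G$, that is, that dividing by $\nu^v_w$ and projecting onto resonant words respect the shuffle relations. The key observation that unlocks this is that the shuffle product preserves the multiset of letters: if $w\sh w' = \sum_{j} w_j$, then every $w_j$ contains exactly the letters of $w$ together with those of $w'$, so that $\nu^v_{w_j}=\nu^v_w+\nu^v_{w'}$ takes the \emph{same} value for all $j$. Consequently, within any single shuffle relation, either every word is resonant or every word is non-resonant. This means that $\xi_v$ is a semisimple derivation whose spectral projections onto the resonant and non-resonant parts preserve $\g$, and that the map sending $\beta$ to the element with coefficients $\beta_w/\nu^v_w$ on non-resonant words and $0$ elsewhere also preserves $\g$: if the common value $\nu^v_w+\nu^v_{w'}$ is nonzero the shuffle relation is merely divided through by it, and if it vanishes the relation reads $0=0$. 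This is precisely the fact guaranteeing $\sigma_n\in\g$, hence $\kappa^{(n)}\in\G$, at each stage, and $\widehat\beta\in\g$ in the limit.

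For the remaining assertions I would argue as follows. Since $(\xi_u\widehat\beta)_w=\nu^u_w\,\widehat\beta_w$ and $\widehat\beta$ is supported on the $v$-resonant words, for $u\in\mathcal{V}(v)$ the implication $\nu^v_w=0\Rightarrow\nu^u_w=0$ forces $\nu^u_w=0$ on the support of $\widehat\beta$; hence $\xi_u\widehat\beta=0$ and, by the bracket formula of Section~\ref{sss:tlag}, $[(u,0),(0,\widehat\beta)]=(0,\xi_u\widehat\beta)=(0,0)$, so the two elements commute (the case $u=v\in\mathcal{V}(v)$ recovering $\xi_v\widehat\beta=0$). Translating through (\ref{eq:l2c}) gives $[g^u,W_{\widehat\beta}]=W_{\xi_u\widehat\beta}=0$, so $g^u$ and $W_{\widehat\beta}$ commute for every $u\in\mathcal{V}(v)$, and in particular $g^{v}$ and $W_{\widehat\beta}$ commute; the factorization $X(t)=\varphi_{tv}(\hat\varphi_t(X(0)))=\hat\varphi_t(\varphi_{tv}(X(0)))$ is then the standard statement that commuting vector fields have commuting flows whose composition generates the flow of the sum.

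Finally, because $\Xi_{tv}\widehat\beta=\widehat\beta$ on the resonant support, the linear equation $(d/dt)\widehat\alpha=\widehat\alpha\star\Xi_{tv}\widehat\beta$ of Section~\ref{sss:eqnsoverlineG} collapses to the autonomous $(d/dt)\widehat\alpha=\widehat\alpha\star\widehat\beta$, whose solution is $\widehat\alpha(t)=\exp_\star(t\widehat\beta)$. The two displayed factorizations of $(t{v},\widehat\alpha(t))$ then follow by directly applying the definition of $\bigstar$, using $\uno_\emptyset=1$, $(\exp_\star(t\widehat\beta))_\emptyset=1$ and $\Xi_{tv}\exp_\star(t\widehat\beta)=\exp_\star(t\widehat\beta)$.
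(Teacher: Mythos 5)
Your proof is correct and follows the same constructive, length-by-length strategy that the paper relies on: the authors themselves only observe that the commutativity of $(0,\widehat{\beta})$ with the elements $(u,0)$, $u\in\mathcal{V}(v)$, follows from (\ref{eq:condnormal}), and for the recursive construction of $\kappa$ and $\widehat{\beta}$ they refer to Section~6.2 of \cite{words} rather than reproducing it. Your write-up supplies precisely the point that deserves explicit verification in that recursion --- namely that dividing by $\nu^v_w$ on non-resonant words and projecting onto resonant words respect the shuffle relations, because every term of $w\sh w'$ carries the same multiset of letters and hence the same value $\nu^v_w+\nu^v_{w'}$, so that each $\sigma_n\in\g$, each $\kappa^{(n)}\in\G$, and $\widehat{\beta}\in\g$ --- while the remaining assertions (commutation of $W_{\widehat{\beta}}$ with $g^u$ via (\ref{eq:l2c}), the factorization of the flow, and $\widehat{\alpha}(t)=\exp_\star(t\widehat{\beta})$ from $\Xi_{tv}\widehat{\beta}=\widehat{\beta}$) are handled exactly as in the paper.
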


\begin{proof} The commutativity of  $(0,\widehat{\beta})$ with all $(u,0)$, $u\in \mathcal{V}(v)$ is clearly implied by (\ref{eq:condnormal}). The (constructive) proof of the remaining assertions is parallel to that presented in Section 6.2 of
\cite{words} and will not be reproduced here.
\end{proof}

\begin{remark}\em
If the vector fields $g_j$ and $f_{\ell}$ are Hamiltonian, then the change of variables is canonical symplectic and the transformed system (\ref{eq:systnormaltrans}) is Hamiltonian; this follows from the fact that, if  $g_j$ and $f_{\ell}$ belong to  a Lie subalgebra of the Lie algebra of vector fields, then, in the  proof of the theorem, all fields (respectively all changes of variables) also belong to that subalgebra (respectively to the corresponding subgroup). From (\ref{eq:Hamseries}) the Hamiltonian function of (\ref{eq:systnormaltrans}) is
\begin{equation}
\label{eq:Hamsystnormaltrans}
     \mathcal{H}_{(v,\widehat{\beta})}(X) = \sum_{j=1}^{d} v_j \, H_j(X) +
\sum_{w\in\W}\widehat{\beta}_w\, H_w(X).
   \end{equation}
\end{remark}

\begin{remark}\em For Hamiltonian problems under  Assumption~\ref{ass:ham},
for each $u\in \mathcal{V}(v)$,
 the Poisson bracket of
\begin{equation*}
  H^{u}(X) =  \sum_{j=1}^{d} u_j \, H_j(X)
\end{equation*}
and (\ref{eq:Hamsystnormaltrans}) vanishes, because $[(u,0),(0,\widehat{\beta})]=0$.
Then each
$H^u$ is
is a first integral of the  normal form system (\ref{eq:systnormaltrans}). If $\mathcal{V}(v)$ is of dimension $d'\leq d$, then (\ref{eq:systnormaltrans}) has $d'$ linearly independent first integrals, in addition to its own Hamiltonian function (\ref{eq:Hamsystnormaltrans}).
\end{remark}

For given $v$, the change of variables  $\kappa\in \G$ and the resulting $\widehat{\beta} \in\g$ in Theorem~\ref{th:theoremnormal} are, in general, not unique:

\begin{theorem}\label{th:new}
Let $\kappa\in \G$, $\widehat{\beta} \in\g$ be the elements related by (\ref{eq:normaleqn}) whose existence is guaranteed by
Theorem~\ref{th:theoremnormal} ($\xi_v\widehat{\beta} = 0$). Then  another change of variables $\tilde \kappa\in\G$ leads to an element $\widehat{\tilde \beta}\in \g$ with
$\xi_v \widehat{\tilde \beta}=0$ if and only if, after defining
$\delta = \tilde \kappa \star \kappa^{-1}$, the relations
\begin{equation}\label{eq:saturday}
\xi_{v} \delta=0, \quad   \widehat{\tilde \beta} = \delta \star \widehat{\beta} \star \delta^{-1}
\end{equation}
are satisfied.
\end{theorem}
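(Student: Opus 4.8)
The plan is to reduce the two-sided statement to a single conjugation identity and then treat the two implications separately; essentially all the substance lies in the ``only if'' direction.

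First I would observe that the normalisation rule (\ref{eq:normaleqn}) is just the change-of-variables formula of Section~\ref{sss:eqnsoverlineG} with $u=0$. Writing $\tilde\kappa=\delta\star\kappa$ and substituting into the defining relation $\widehat{\tilde\beta}=\tilde\kappa\star\beta\star\tilde\kappa^{-1}-(\xi_v\tilde\kappa)\star\tilde\kappa^{-1}$, I would use that $\xi_v$ is a derivation for $\star$, that $(\delta\star\kappa)^{-1}=\kappa^{-1}\star\delta^{-1}$, and that $\xi_v\delta^{-1}=-\delta^{-1}\star(\xi_v\delta)\star\delta^{-1}$, in order to collect all terms carrying $\kappa$ into the combination $\kappa\star\beta\star\kappa^{-1}-(\xi_v\kappa)\star\kappa^{-1}=\widehat\beta$. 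This yields the key identity
\begin{equation*}
\widehat{\tilde\beta}=\delta\star\widehat\beta\star\delta^{-1}-(\xi_v\delta)\star\delta^{-1},
\end{equation*}
valid for every $\delta\in\G$, which is precisely (\ref{eq:normaleqn}) applied to $\widehat\beta$ with change of variables $\delta$ and the same $v$. Since $\xi_v\widehat\beta=0$, the whole statement reduces to this: characterise those $\delta\in\G$ with $\xi_v\widehat{\tilde\beta}=0$, show that this forces $\xi_v\delta=0$ (whereupon the identity collapses to $\widehat{\tilde\beta}=\delta\star\widehat\beta\star\delta^{-1}$), and conversely.

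The ``if'' direction is immediate: if $\xi_v\delta=0$ then also $\xi_v\delta^{-1}=0$, the last term of the key identity drops out, and applying $\xi_v$ to $\delta\star\widehat\beta\star\delta^{-1}$ gives zero term by term because $\xi_v\delta=\xi_v\widehat\beta=\xi_v\delta^{-1}=0$; hence $\xi_v\widehat{\tilde\beta}=0$, and (\ref{eq:saturday}) holds. For the ``only if'' direction, which is the main obstacle, I would grade $\C^\W$ by word length, writing $\delta=\sum_n\delta^{[n]}$ with $\delta^{[n]}$ supported on words of length $n$; both $\star$ and the diagonal operator $\xi_v$ respect this grading, and the empty-word component of any element of $\G$ equals $\uno$. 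Arguing by induction on $n$ (equivalently, inspecting the lowest degree at which $\xi_v\delta$ could fail to vanish), assume $\xi_v\delta^{[k]}=0$ for all $k<n$, so that $p:=\xi_v\delta$ has no components below degree $n$. Applying $\xi_v$ to the key identity and extracting the degree-$n$ part, the terms arising from $\delta\star\widehat\beta\star\delta^{-1}$ each carry one factor $p$ (degree $\ge n$) together with the factor $\widehat\beta$, whose empty-word part vanishes since $\widehat\beta\in\g$; they therefore live in degree $\ge n+1$ and contribute nothing. The nonlinear term $p\star\delta^{-1}\star p\star\delta^{-1}$ lives in degree $\ge 2n$ and likewise drops out. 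What remains at degree $n$ is exactly $\xi_v^2\delta^{[n]}$, so $\xi_v\widehat{\tilde\beta}=0$ gives $\xi_v^2\delta^{[n]}=0$.

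The last step is to pass from $\xi_v^2\delta^{[n]}=0$ to $\xi_v\delta^{[n]}=0$: since $\xi_v$ acts on the word $w$ by the scalar $\nu_w^v\in\C$ and $(\nu_w^v)^2=0$ forces $\nu_w^v=0$, these two conditions coincide. This closes the induction, giving $\xi_v\delta=0$, and feeding this back into the key identity produces $\widehat{\tilde\beta}=\delta\star\widehat\beta\star\delta^{-1}$, i.e. exactly the relations (\ref{eq:saturday}). I expect the degree bookkeeping in the ``only if'' part to be the only delicate point: one must verify that the conjugation terms and the nonlinear term really are pushed into strictly higher degree, as it is this fact that isolates the clean scalar consequence $\xi_v^2\delta^{[n]}=0$ from what would otherwise be an intractable nonlinear relation among $\delta$, $\widehat\beta$ and their $\xi_v$-images.
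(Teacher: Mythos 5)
Your proof is correct and follows essentially the same route as the paper: the \lq if\rq\ part is the identical derivation computation, and your \lq only if\rq\ argument is the paper's homological equation $\xi_v\delta=\delta\star\widehat\beta-\widehat{\tilde\beta}\star\delta$ hit a second time with $\xi_v$, with your degree-$n$ bookkeeping being a repackaging of the paper's componentwise induction on word length leading to $(\nu_w^v)^2\delta_w=0$ and hence $\nu_w^v\delta_w=0$. No gaps.
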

\begin{proof} For the \lq if\rq\ part, recall that $\xi_v$ is a derivation and that then
$$
\xi_v \delta^{-1} = -\delta^{-1}\star (\xi_v\delta)*\delta^{-1}
$$
 and
$$\xi_v (\delta   \star \widehat{\beta} \star \delta^{-1}) =
(\xi_v\delta)\star \widehat{\beta} \star \delta^{-1}+
\delta\star (\xi_v\widehat{\beta}) \star \delta^{-1}+
\delta\star \widehat{\beta} \star (\xi_v\delta^{-1}).
$$
The last expression vanishes if $\xi_v\delta = 0$ and $\xi_v \widehat{\beta}=0$. It remains to show that
$\delta \star \widehat{\beta} \star \delta^{-1}$ is the element associated to $\tilde\kappa$ as in (\ref{eq:normaleqn}). To achieve that goal it is enough to multiply (\ref{eq:normaleqn}) by $\delta$ on the right and $\delta^{-1}$ on the left and use again that $\xi_v$ is a derivation.

For the (more difficult) \lq only if\rq\ part, we begin by noting that
the map
$$
\kappa \mapsto  \kappa *\beta*\kappa^{-1} - (\xi_{v} \kappa)*\kappa^{-1}
$$
is  an action of the group $\G$ on $\g$. From there
we obtain
the homological equation
$$
\xi_v\delta= \delta \star \widehat{\beta} - \widehat{\tilde \beta} \star\delta.
$$
It is then clear that the proof will be ready if we show that $\xi_v \delta = 0$.
Since by assumption $\xi_v \widehat{\beta}=0$, $\xi_v\widehat{\tilde \beta}=0$,  the homological equation implies
$$
\xi_v (\xi_v\delta)= (\xi_v \delta) \star \widehat{\beta} - \widehat{\tilde \beta} \star(\xi_v\delta),
$$
ie
$$
(\nu_{\ell_1\cdots\ell_n}^v)^2 \delta_{\ell_1\cdots\ell_n} = \sum_{j=1}^{n-1} \Big(
(\xi_v \delta)_{\ell_1 \cdots \ell_j} \widehat{\beta}_{\ell_{j+1} \cdots \ell_{n}} -
\widehat{\tilde \beta}_{\ell_1 \cdots \ell_j} (\xi_v \delta)_{\ell_{j+1} \cdots \ell_{n}},
\Big)
$$
for each nonempty word $\ell_1\cdots\ell_n$.
If we assume inductively that $(\xi_v \delta)_w= 0$ for words with less than $n$ letters, then  $(\xi_v \delta)_{\ell_1\cdots\ell_n} =
\nu^v_{\ell_1\cdots\ell_n}\delta_{\ell_1\cdots\ell_n}=0$, because when $\nu_{\ell_1\cdots\ell_n}^v\neq 0$ the last displayed formula shows tha
$\delta_{\ell_1\cdots\ell_n}=0$.
\end{proof}

\begin{remark}\label{rem:inv}\em
The relations (\ref{eq:saturday}) may be rewritten as $\kappa^{-1}\star \widehat{\beta}\star\kappa = \tilde\kappa^{-1}\star \widehat{\tilde\beta}\star\tilde\kappa$
and $\kappa^{-1} \star \xi_{v} \kappa = \tilde\kappa^{-1} \star \xi_{v}\tilde \kappa$. Note also that, for each $u\in\mathcal{V}(v)$,
$\xi_v \delta = 0$ implies that $\xi_u \delta = 0$ and hence $\kappa^{-1} \star \xi_{u} \kappa = \tilde\kappa^{-1} \star \xi_{u}\tilde \kappa$.
\end{remark}

\begin{remark}\em If for all nonempty words $\nu_w^v \neq 0$, then $\xi_v\widehat{\beta} = 0$ and
$\widehat{\beta}_\emptyset = 0$ lead to $\widehat{\beta} = 0$.
In
Theorem~\ref{th:new},
$\delta_\emptyset = 1$ and $\xi_v\delta = 0$ imply
$\delta = \uno$ or $\kappa = \tilde \kappa$; therefore  the normal form coincides with the unperturbed problem and the change of variables $\kappa$ is unique.
\end{remark}

\subsection{Back  into  the original variables}
\label{ss:back}

Let us now push forward the pairwise commuting vector fields $g^{v}(X)$, $g^u(X)$, $u \in \mathcal{V}(v)$, and ${W}_{\widehat \beta}(X)$
to the original variables $x$.
After applying the recipe for changing variables given in Section~\ref{sss:eqnsoverlineG} and Proposition~\ref{l:LieBracketInvariance}, we obtain commuting fields
$g^{v}(x) +  W_{\kappa^{-1} \star \xi_{v} \kappa}(x)$, $g^{u}(x) +  W_{\kappa^{-1} \star \xi_{u} \kappa}(x)$, $u \in \mathcal{V}(v)$, and
$W_{\kappa^{-1}\star \widehat{\beta}\star\kappa}(x)$.
In particular we may rewrite the right hand-side of (\ref{eq:systnormal}) as a commuting sum of two terms:
\begin{equation}\label{eq:desc}
\frac{d}{dt}x = g^{v}(X) +  W_\beta(X) = \overbrace{g^{v}(x) +  W_{\kappa^{-1} \star \xi_{v} \kappa}(x)}  + \overbrace{W_{\kappa^{-1}\star \widehat{\beta}\star\kappa}(x)}.
\end{equation}
Obviously both terms in braces commute with their sum $g^v(x)+W_{\beta}(x)$. The first vector field in braces in (\ref{eq:desc}) generates a flow
\begin{equation*}
  x(t) =
\overline{W}_{(0,\kappa^{-1})\bigstar(t\omega,\uno)\bigstar(0,\kappa)}(x(0))=
\overline{W}_{(t\omega,\kappa^{-1}
\star \Xi_{t \omega} \kappa)}(x(0))
\end{equation*}
conjugate to the flow $\varphi_{tv}(x_0)= \overline{W}_{(t\omega,\uno)}(x(0))$ of the unperturbed  original system in (\ref{eq:systnormal}).
The second term in braces  generates a flow
$$x(t) = W_{\kappa^{-1}\star\exp_\star(t\widehat{\beta})*\kappa}(x(0));$$
this second term is necessary whenever the solution flows of (\ref{eq:systnormal}) and the unperturbed system cannot be conjugated to one another (for instance if the unperturbed problem has periodic solutions whose period changes after the perturbation).

 According to Remark~\ref{rem:inv},   the $v$-dependent elements $\overline{\beta} = \kappa^{-1}\star \widehat{\beta}\star\kappa$ and
$\rho(u)=\kappa^{-1} \star \xi_{u} \kappa$ do not depend on the choice of $\kappa \in \G$ in Theorem~\ref{th:theoremnormal}. To summarize, the original field  $g^v(x)+W_{\beta}(x)$ has been decomposed as a commuting sum of $g^{v}(x) + W_{\rho(v)}(x)$ and $W_{\overline{\beta}}(x)$ in such a way that
$W_{\overline{\beta}}(x)$ and $g^{v}(x) + W_{\rho(v)}(x)$ also commute with all fields $g^{u}(x) + W_{\rho(u)}(x)$, $u \in \mathcal{V}(v)$, which in turn commute among themselves. We shall present below an algorithm for computing the coefficients $\overline{\beta}$ and $\rho(u)$ needed to write these fields.

\begin{remark}\em
In the Hamiltonian case, under Assumption~\ref{ass:ham}, for each $u \in \mathcal{V}(v)$,
\begin{equation*}
  %\label{eq:ffiv}
       \mathcal{H}_{(u,\rho(u))}(x) = \sum_{j=1}^{d} u_j \, H_j(x) +
\sum_{w\in\W}\rho(u)_w\, H_w(x)
\end{equation*}
is a  first integral of the system (\ref{eq:systnormal}). Such formal  integral depends linearly on $u$ and therefore (\ref{eq:systnormal}) possesses  $d'\leq d$ linearly independent  invariants ($d'$ is the dimension of  $\mathcal{V}(v)$) in addition to its own  Hamiltonian function $\mathcal{H}_{(v,\beta)}(x)$. These invariants may be written down easily using the algorithm presented below to compute $\overline{\beta}$ and $\rho(u)$.
\end{remark}

The next results provides, under an additional hypothesis, a characterization of $\overline{\beta}$ and $\rho(u)$. In the context of Example 3 in Section~\ref{ss:examples}, the word ${\bf 0}\in\Z^d$ has
$\nu_{\zero}^{v}=0$ and the hypothesis holds whenever the Fourier coefficient $\hat f_{\bf 0}(y)$ is not identically zero.

\begin{theorem}
\label{th:uniquelydetermined}
Given $(v,\beta)\in \overline{\g}$ assume that there is a letter $\zero \in A$ such that $\nu_{\zero}^{v}=0$ and  $\beta_{\zero} \neq 0$.  Then for each $u\in \mathcal{V}(v)$, the element $\rho(u) \in \g$ is uniquely determined by the relations
\begin{gather}
  \label{eq:condrhov1}
  [(v,\beta),(u,\rho(u))]= \xi_{v} \rho(u) - \xi_{u}\beta + [\beta,\rho(u)]=0, \\
  \label{eq:condrhov2}
\rho(u)_{\ell_1 \cdots \ell_n}=0 \quad \mbox{if} \quad \nu_{\ell_1}^{v}=\cdots=\nu_{v}^{v}=0,
\end{gather}
and $\overline{\beta} \in \g$ is uniquely determined by the relations
\begin{gather}
  \label{eq:condbbeta1}
 [(v,\beta),(0,\overline{\beta})] = \xi_{v} \overline{\beta} + [\beta,\overline{\beta}]=0, \\
  \label{eq:condbbeta2}
\overline{\beta}_{\ell_1 \cdots \ell_n}=\beta_{\ell_1 \cdots \ell_n} \quad \mbox{if} \quad \nu_{\ell_1}^{v}=\cdots=\nu_{\ell_n}^{v}=0.
\end{gather}
\end{theorem}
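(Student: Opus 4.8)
The plan is to prove existence and uniqueness separately, exploiting the structure already established in Theorems~\ref{th:theoremnormal} and~\ref{th:new}. Existence is essentially free: Theorem~\ref{th:theoremnormal} guarantees a $\kappa\in\G$ producing $\widehat\beta$ with $\xi_v\widehat\beta=0$, and the pushforward discussion in Section~\ref{ss:back} defines $\overline\beta=\kappa^{-1}\star\widehat\beta\star\kappa$ and $\rho(u)=\kappa^{-1}\star\xi_u\kappa$. By Proposition~\ref{l:LieBracketInvariance} these satisfy the bracket relations~(\ref{eq:condrhov1}) and~(\ref{eq:condbbeta1}), since the corresponding relations $[(v,0),(0,\widehat\beta)]=0$ and $[(v,0),(u,0)]=(0,\xi_u\widehat\beta\!-\!\xi_v\cdot 0)=0$ hold in the transformed variables (using $\xi_v\widehat\beta=0$ and $u\in\mathcal{V}(v)\Rightarrow\xi_u\widehat\beta=0$). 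The defining constraints~(\ref{eq:condrhov2}) and~(\ref{eq:condbbeta2}) on the \lq\lq resonant\rq\rq\ words (those with every $\nu^v_{\ell_i}=0$) must then be checked from the explicit formulas; this is where the hypothesis $\nu_\zero^v=0$, $\beta_\zero\neq0$ is expected to play no role, so existence should follow directly.

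The substance of the theorem is \emph{uniqueness}, and this is where the hypothesis on the distinguished letter $\zero$ enters. First I would treat $\rho(u)$. Suppose $\rho,\rho'$ both satisfy~(\ref{eq:condrhov1})--(\ref{eq:condrhov2}); set $\sigma=\rho-\rho'$. Subtracting the two copies of~(\ref{eq:condrhov1}) kills the common $\xi_u\beta$ term and yields the homogeneous homological equation $\xi_v\sigma+[\beta,\sigma]=0$, with $\sigma_{\ell_1\cdots\ell_n}=0$ on all fully-resonant words. The aim is to show $\sigma=0$ by induction on word length. Writing the equation componentwise at a word $w=\ell_1\cdots\ell_n$ gives $\nu_w^v\,\sigma_w=-[\beta,\sigma]_w$, where the right-hand side involves $\beta$ and $\sigma$ at proper subwords. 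When $\nu_w^v\neq0$ this determines $\sigma_w$ from shorter data, forcing $\sigma_w=0$ by induction. The delicate case is $\nu_w^v=0$ with $w$ not fully resonant; here~(\ref{eq:condrhov2}) does not directly apply, so one must extract $\sigma_w$ from the bracket term itself. This is exactly where the letter $\zero$ is used: since $\beta_\zero\neq0$ and $\nu_\zero^v=0$, appending or prepending $\zero$ relates $\sigma_w$ to $\sigma$ at the word $\zero w$ (or $w\zero$) of the same $\nu^v$-weight, and the convolution bracket $[\beta,\sigma]$ produces a term $\beta_\zero\sigma_w$ (from splitting off the single letter $\zero$) with nonzero coefficient, allowing $\sigma_w$ to be solved in terms of strictly shorter quantities.

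The main obstacle, and the step I would spend the most care on, is organizing this induction so that the resonant-but-not-fully-resonant coefficients are genuinely pinned down. The natural device is to induct not simply on the number of letters but on the number of \emph{non-resonant} letters (letters $\ell$ with $\nu_\ell^v\neq0$) appearing in $w$, or equivalently to peel off maximal blocks of $\zero$'s. One must verify that evaluating the homological equation at a suitably \lq\lq $\zero$-extended\rq\rq\ word isolates $\beta_\zero\sigma_w$ against terms that are already known to vanish by the inductive hypothesis; the combinatorics of the convolution product $\star$ in the bracket, together with the fact that $\xi_v$ annihilates any word ending or starting in $\zero$ with the same weight, is what makes this work. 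Once uniqueness of $\rho(u)$ is established, the argument for $\overline\beta$ is strictly parallel: differencing two solutions of~(\ref{eq:condbbeta1})--(\ref{eq:condbbeta2}) gives the same homogeneous equation $\xi_v\tau+[\beta,\tau]=0$ with $\tau=0$ on fully-resonant words (since both $\overline\beta$'s agree there by~(\ref{eq:condbbeta2})), so the identical $\zero$-driven induction yields $\tau=0$. I would present the $\rho$ case in full and remark that $\overline\beta$ follows \emph{mutatis mutandis}.
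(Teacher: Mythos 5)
Your overall strategy is the paper's strategy: existence is read off from Theorem~\ref{th:theoremnormal}, Proposition~\ref{l:LieBracketInvariance} and the push-forward of Section~\ref{ss:back}, and uniqueness is an induction on word length in which the distinguished letter $\zero$ is used to handle words $w$ with $\nu^v_w=0$ that are not fully resonant. Passing to the difference $\sigma$ of two solutions is equivalent to the paper's direct recursion, since the defining relations are affine in the unknown, so that reformulation costs nothing.

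The gap is exactly in the step you defer, and it is the crux. Evaluating the homogeneous equation $\xi_v\sigma+[\beta,\sigma]=0$ at $\zero w$, $w=\ell_1\cdots\ell_n$, the diagonal term drops out (since $\nu^v_{\zero w}=\nu^v_w=0$) and the splitting after the first letter does yield $\beta_\zero\,\sigma_w$; but the splitting before the last letter yields $-\beta_{\ell_n}\,\sigma_{\zero\ell_1\cdots\ell_{n-1}}$, which involves $\sigma$ at \emph{another word of the same length} $n$, again with $\nu^v=0$ whenever $\nu^v_{\ell_n}=0$. These terms are therefore \emph{not} ``already known to vanish by the inductive hypothesis,'' contrary to what you assert: one must iterate, trading $\sigma_{\zero\ell_1\cdots\ell_{n-1}}$ for $\sigma_{\zero\zero\ell_1\cdots\ell_{n-2}}$, and so on, until the current word $\zero\cdots\zero\,\ell_1\cdots\ell_k$ ends in a letter with $\nu^v_{\ell_k}\neq0$ ... at which point its $\nu^v$-value equals $-\nu^v_{\ell_{k+1}\cdots\ell_n}\neq0$ and its coefficient is pinned down by the $\nu^v\neq0$ case of the recursion. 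The auxiliary quantity that decreases along this chain is the number of letters to the right of the rightmost non-resonant letter of $w$; your proposed parameter, the number of non-resonant letters, is \emph{invariant} under the substitution $w\mapsto\zero\ell_1\cdots\ell_{n-1}$, so an induction organized on it cannot close, and the chain prepends $\zero$'s rather than ``peeling off blocks of $\zero$'s.'' With the induction reorganized on the pair (word length, number of trailing resonant letters), your argument becomes the paper's proof.
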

\begin{proof}
We already know that (\ref{eq:condrhov1}) and  (\ref{eq:condbbeta1}) are fulfilled. Induction on $n$ can be used to prove (\ref{eq:condrhov2}). The relation  (\ref{eq:condbbeta2}) is implied by $\overline{\beta} = \beta - \rho(v)$.

We next show that (\ref{eq:condbbeta1})--(\ref{eq:condbbeta2}) uniquely determine $\overline{\beta} \in \g$. (That (\ref{eq:condrhov1})--(\ref{eq:condrhov2}) uniquely determine $\rho(v)$ is proved in a very similar way.) We work by induction on the number of letters.
The condition (\ref{eq:condbbeta1}) for words with one letter $\ell \in A$ yields $\nu_{\ell}^{v}\overline{\beta}_\ell=0$. Therefore, if $\nu_{\ell}^{v}\neq 0$, then $\overline{\beta}_{\ell} = 0$;  otherwise, $\overline{\beta}_{\ell}=\beta_{\ell}$ by virtue of (\ref{eq:condbbeta2}).   For a   word $w=\ell_1\cdots \ell_n$, $n>1$,
(\ref{eq:condbbeta1}) reads
\begin{equation*}
 \nu_{w}^{v} \overline{\beta}_w -
\sum_{k=1}^{n-1} (\beta_{\ell_1\cdots \ell_{k}}\overline{\beta}_{\ell_{k+1}\cdots \ell_n} -
                 \overline{\beta}_{\ell_1\cdots \ell_{k}}\beta_{\ell_{k+1}\cdots \ell_n})=0
\end{equation*}
a relation that obviously determines
$\overline{\beta}_{w}$ when $\nu_{w}^{v} \neq 0$. If $\nu_{w}^{v} = 0$,  we distinguish two cases. If $\nu_{\ell_1}^{v}=\dots=\nu_{\ell_n}^{v}=0$  then $\overline{\beta}_{\ell_1\cdots \ell_n}$ is determined by condition (\ref{eq:condbbeta2}). Otherwise, we consider (\ref{eq:condbbeta1}) for the word $w=\zero\ell_1\cdots \ell_n$, to obtain
\begin{equation*}
\overline{\beta}_{\ell_1\cdots \ell_n} - \frac{\beta_{\ell_n}}{\beta_{\zero}}\overline{\beta}_{\zero\ell_1 \cdots \ell_{n-1}} = \cdots
\end{equation*}
where the right-hand side is, by the induction hypothesis, uniquely determined, but $\overline{\beta}_{\zero\ell_1 \cdots \ell_{n-1}}$ may in principle pose difficulties because it refers to a word with $n$ letters. If $\nu_{\ell_n}^{v} = 0$, then $\nu_{\zero\ell_1 \cdots \ell_{n-1}}^{v} = 0$ so that $\overline{\beta}_{\zero\ell_1 \cdots \ell_{n-1}}$ cannot be found by means of (\ref{eq:condbbeta1}). But we may then repeat the process until we eventually obtain
\begin{equation*}
%\label{eq:thaux}
\overline{\beta}_{\ell_1\cdots \ell_n} - \frac{\beta_{\ell_n}\cdots \beta_{\ell_{k+1}}}{\beta_{\zero}^{n-k+1}}\overline{\beta}_{\zero\cdots\zero \ell_{1} \cdots \ell_{k}} = \cdots
\end{equation*}
with  $\nu_{\zero\cdots\zero \ell_{1} \cdots \ell_{k}}^{v} \neq 0$, and hence
$\overline{\beta}_{\zero\cdots\zero \ell_{1} \cdots \ell_{k}}$ determined via (\ref{eq:condbbeta1}) .
\end{proof}
% \nu_{w} \overline{\beta}_w - \beta_{\ell_1}\overline{\beta}_{\ell_2\cdots \ell_n} +  \overline{\beta}_{\ell_1\cdots \ell_{n-1}} \beta_{\ell_n} = \sum_{k=2}^{n-1} (\beta_{\ell_1}\overline{\beta}_{\ell_{k+1}\cdots \ell_n} )
\begin{remark}\em
  The proof of Theorem~\ref{th:uniquelydetermined} provides very simple recursions for the coefficients of $\overline{\beta}$ and $\rho(v)$ corresponding to the original problem (\ref{eq:odegf})--(\ref{eq:fdec}) where $\beta \in \g$ is given by (\ref{eq:b}):
\begin{gather*}
    \overline{\beta}_{\ell} = 1, \quad \mbox{if} \quad \nu_{\ell}^{v}=0, \\
    \overline{\beta}_{\ell} = 0, \quad \mbox{if} \quad \nu_{\ell}^{v}\neq 0, \\
    \overline{\beta}_{\ell_1\cdots \ell_n} = \frac{\overline{\beta}_{\ell_1\cdots \ell_{n-1}}-\overline{\beta}_{\ell_2\cdots \ell_n}}{\nu_{\ell_1\cdots \ell_n}^{v}} , \quad \mbox{if} \quad n>1, \ \nu_{\ell_1\cdots \ell_n}^{v} \neq 0,
\end{gather*}
and if $n>1$ and $\nu_{\ell_1\cdots \ell_n}^{v} =0$,
\begin{equation*}
\overline{\beta}_{\ell_1\cdots \ell_n} =
\left\{
\begin{array}{cl}
 0, & \mbox{if} \quad \nu_{\ell_1}^{v} = \cdots = \nu_{\ell_n}^{v} = 0, \\
\overline{\beta}_{\zero \ell_1\cdots \ell_{n-1}}, & \mbox{otherwise.} \\
\end{array}
\right.
\end{equation*}
The recursions for $\rho(u)$, $u \in \mathcal{V}(v)$ are obtained by replacing the first two lines above by
 \begin{gather*}
\rho(u)_{\ell}=0, \quad \mbox{if} \quad \nu_{\ell}^{v}=0, \\
\rho(u)_{\ell}=\frac{\nu_{\ell}^{u}}{\nu_{\ell}^{v}}, \quad \mbox{if} \quad \nu_{\ell}^{v}\neq 0;
 \end{gather*}
and the last three lines  remain the same, with $\rho(u)$ in lieu of $\overline{\beta}$.
\end{remark}

\subsection{Normal forms for elements in the group $\overline{\G}$}

Given an element $(v,\eta) \in \overline{\G}$, it is sometimes desirable (in particular, when $(v,\eta)$ represents an integrator for the system  (\ref{eq:systnormal})) to reduce it to normal form. The following developments are parallel to those presented above for elements in the algebra; full details will not be given.
Assume that we study the iteration
\begin{equation*}
x_n = \overline{W}_{(v,\eta)}(x_{n-1}), \quad n=1,2,3,\ldots
\end{equation*}
A change of variables $x_n=W_{\kappa}(X_n)$, $\kappa \in \G$, gives
\begin{equation*}
X_n = \overline{W}_{(v,\widehat{\eta})}(X_{n-1})=\varphi_{v}({W}_{\widehat{\eta}}(X_{n-1})), \quad n=1,2,3,\ldots,
\end{equation*}
with
\begin{equation}
\label{eq:hdelta}
(v,\widehat{\eta}) = (0,\kappa) \bigstar (v,\eta) \bigstar (0,\kappa)^{-1} =
(v, \kappa \star \eta \star (\Xi_{v} \kappa^{-1})) \in \overline{\G}.
%(v,\widehat{\delta}) = (0,\kappa) \bigstar (v,\delta) \bigstar (0,\kappa)^{-1} =
%(v, \kappa \star \delta \star \Xi_{v} \kappa^{-1}) \in \overline{\G}
\end{equation}
If we find $\kappa \in \G$ such that $(0,\widehat{\eta}) \in \overline{\g}$ commutes with $(v,0) \in \overline{\g}$, then  $x_n = W_{\kappa}(\varphi_{n v} (z_n)))$, where $z_n$ satisfies the simpler (normal form) recursion
\begin{equation*}
z_n = {W}_{\widehat{\eta}}(z_{n-1}), \quad n=1,2,3,\ldots,
\end{equation*}
with $z_0 = X_0 = W_{\kappa^{-1}}(x_0)$.

Since $(v,0) \star (0,\widehat{\eta}) = (v,\Xi_v \widehat{\eta})$ and
$(0,\widehat{\eta}) \star (v,0)=(v,\widehat{\eta})$, we have that $(0,\widehat{\eta})$ and $(v,0)$ commute if and only if
\begin{equation}
\label{eq:v-commuting-G}
\Xi_v \widehat{\eta} = \widehat{\eta}.
\end{equation}

Moreover,  if $u \in \C^d$ satisfies
\begin{equation}
\label{eq:condvG}
   \exp{(\nu^{u}_{w})} =1 \quad \mbox{for all} \quad w \in \W \quad \mbox{such that} \quad
\exp{(\nu^{v}_{w})} = 1,
\end{equation}
then $\Xi_u \widehat{\eta} = \widehat{\eta}$, and thus
\begin{equation*}
  (u,\uno)\bigstar (0,\widehat{\eta}) = (0,\widehat{\eta}) \bigstar (u,\uno).
\end{equation*}

The following analogue  of Theorem~\ref{th:theoremnormal} holds.

\begin{theorem}
\label{th:normal-forms-in-G}
Given $(v,\eta) \in \overline{\G}$, there exists  $\kappa\in\G$ such that  $(v,\widehat{\eta}) \in \overline{\G}$ given by (\ref{eq:hdelta}) satisfies (\ref{eq:v-commuting-G}).
%(i.e., $ \widehat{\eta}_w = 0$ for all words $w \in \W$ with $\exp{(\nu^{v}_{w})} \neq 1$).
%
%Such $\kappa\in \G$ can be uniquely determined by imposing the additional condition $(\log_{\star}\kappa)_w=0\ $\ for all words $w$ with $\exp{(\nu^{v}_{w})} =1$.
Hence, the map $\overline{W}_{(v,\eta)}$ is conjugate to
\begin{equation*}
\overline{W}_{(v,\widehat{\eta})} = \varphi_{v} \circ W_{\widehat{\eta}} = W_{\widehat{\eta}} \circ\varphi_{v}
\end{equation*}
 through the transformation $x = W_\kappa(X)$.
Also, $\varphi_{u}$ and ${W}_{\widehat \eta}$ commute for all $u \in \C^d$ satisfying (\ref{eq:condvG}).
\end{theorem}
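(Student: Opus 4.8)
The plan is to mirror the construction used for Theorem~\ref{th:theoremnormal}, transporting everything from the Lie algebra $\overline{\g}$ to the group $\overline{\G}$ via the exponential/logarithm machinery developed in Proposition~\ref{th:logarithm}. The target condition $\Xi_v\widehat{\eta}=\widehat{\eta}$ says precisely that $\widehat{\eta}_w=0$ for every word $w$ with $\exp(\nu_w^v)\neq 1$, which is the group-level analogue of the Lie-algebra condition $\xi_v\widehat{\beta}=0$, i.e.\ $\nu_w^v\,\widehat{\beta}_w=0$. So the natural strategy is to build $\kappa\in\G$ recursively on the number of letters so that the conjugate $\widehat{\eta}=\kappa\star\eta\star\Xi_v\kappa^{-1}$ has vanishing coefficients on all \emph{resonant-free} words, exactly as in the algebra case.

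First I would write down the explicit coefficient form of the conjugation~(\ref{eq:hdelta}). Expanding $\widehat{\eta}=\kappa\star\eta\star(\Xi_v\kappa^{-1})$ on a word $w=\ell_1\cdots\ell_n$, the leading dependence on the unknown $\kappa_w$ is of the form $\bigl(1-\exp(\nu_w^v)\bigr)\kappa_w$ plus terms involving $\kappa$ on strictly shorter words and the (known) $\eta$; this is the group counterpart of the factor $\int_0^1\exp(t\nu_w^v)\,dt$ appearing in the proof of Proposition~\ref{th:logarithm}. Then I would proceed by induction on $n$: for a word $w$ with $\exp(\nu_w^v)\neq 1$, I demand $\widehat{\eta}_w=0$ and solve uniquely for $\kappa_w$ (the coefficient $1-\exp(\nu_w^v)$ is nonzero); for a word $w$ with $\exp(\nu_w^v)=1$, I simply set $\kappa_w=0$ (or any convenient value), since there is nothing to cancel and~(\ref{eq:v-commuting-G}) imposes no constraint on $\widehat{\eta}_w$. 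This determines a candidate $\kappa\in\C^\W$, and by construction the resulting $\widehat{\eta}$ satisfies $\Xi_v\widehat{\eta}=\widehat{\eta}$.

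The two points requiring care — and the main obstacle — are that the $\kappa$ so constructed must actually lie in the \emph{group} $\G$ (i.e.\ satisfy the shuffle relations), and that the resulting $\widehat{\eta}$ must lie in $\G$ as well. For the latter this is automatic once $\kappa\in\G$, because $\Xi_v$ is a shuffle homomorphism and $\star$ preserves $\G$, so $\widehat{\eta}=\kappa\star\eta\star\Xi_v\kappa^{-1}$ is a product of group elements. The genuinely delicate step is verifying that the recursively defined $\kappa$ respects the shuffle relations; rather than checking this combinatorially on coefficients, I would argue at the Lie-algebra level. Writing $(v,\eta)=\exp_\star(v,\beta)$ for a suitable $(v,\beta)\in\overline{\g}$ is possible provided $v$ meets the non-resonance hypothesis~(\ref{eq:non-resonance-for-log}); applying Theorem~\ref{th:theoremnormal} to $(v,\beta)$ yields $\kappa\in\G$ with $\xi_v\widehat{\beta}=0$, and since the exponential intertwines conjugation in $\overline{\g}$ with conjugation in $\overline{\G}$ and sends $\xi_v\widehat{\beta}=0$ to $\Xi_v\widehat{\eta}=\widehat{\eta}$, the group element $\widehat{\eta}=\exp_\star(\widehat{\beta})$ automatically satisfies~(\ref{eq:v-commuting-G}) with a bona fide $\kappa\in\G$. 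When $v$ is resonant one instead runs the direct coefficient recursion above and checks $\kappa\in\G$ by the usual filtered/graded argument, noting that the shuffle relations on words of length $n$ involve only coefficients on words of length $\le n$ and are preserved by the recursion.

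Finally, the two stated consequences follow immediately. The conjugacy $\overline{W}_{(v,\eta)}=W_\kappa\circ\overline{W}_{(v,\widehat{\eta})}\circ W_{\kappa^{-1}}$ is just the composition rule for $\bigstar$ applied to~(\ref{eq:hdelta}), and the factorisation $\overline{W}_{(v,\widehat{\eta})}=\varphi_v\circ W_{\widehat{\eta}}=W_{\widehat{\eta}}\circ\varphi_v$ is exactly the statement that $(0,\widehat{\eta})$ and $(v,\uno)$ commute, i.e.\ $(v,\uno)\bigstar(0,\widehat{\eta})=(0,\widehat{\eta})\bigstar(v,\uno)$, which the displayed identities preceding the theorem reduce to~(\ref{eq:v-commuting-G}). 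The commutativity of $\varphi_u$ and $W_{\widehat{\eta}}$ for every $u$ satisfying~(\ref{eq:condvG}) is then read off from the observation, already noted in the text, that such $u$ give $\Xi_u\widehat{\eta}=\widehat{\eta}$ and hence $(u,\uno)\bigstar(0,\widehat{\eta})=(0,\widehat{\eta})\bigstar(u,\uno)$.
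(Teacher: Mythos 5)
Your overall strategy coincides with the one the paper intends: the paper gives no detailed proof of this theorem (it says only that "the developments are parallel to those presented above for elements in the algebra; full details will not be given"), and immediately after the statement it describes exactly your second route — under the non-resonance condition (\ref{eq:non-resonance-for-log}) one takes the logarithm $(v,\beta)$ of $(v,\eta)$ via Proposition~\ref{th:logarithm}, applies Theorem~\ref{th:theoremnormal}, and exponentiates $(v,\widehat\beta)$, with the same $\kappa$ effecting both normalizations. Your identification of the leading term $\bigl(1-\exp(\nu_w^v)\bigr)\kappa_w$ in $\widehat\eta_w$ is correct, as is your derivation of the conjugacy, of the factorisation $\varphi_v\circ W_{\widehat\eta}=W_{\widehat\eta}\circ\varphi_v$ from (\ref{eq:v-commuting-G}), and of the commutation with $\varphi_u$ for $u$ satisfying (\ref{eq:condvG}).

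The gap is in the resonant case, which the theorem must cover since it places no hypothesis on $v$. There your argument falls back on the coefficient-by-coefficient recursion, and the assertion that the resulting $\kappa$ satisfies the shuffle relations "because they are preserved by the recursion" is not an argument: the relation $\kappa_w\kappa_{w'}=\sum_j\kappa_{w_j}$ couples the values you choose freely on resonant words of length $n$ (which you set to zero) with the values forced on non-resonant words of the same length, and nothing in your construction guarantees compatibility. The standard repair — and the one implicit in the paper's appeal to the parallel Lie-algebra construction of \cite{words} — is not to prescribe $\kappa$ word by word but to build it as an infinite $\star$-product of factors $\exp_\star(\beta^{(n)})$ with each $\beta^{(n)}\in\g$ supported on words of $n$ letters, so that $\kappa\in\G$ is automatic. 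The recursion then shifts to showing that the order-$n$ obstruction (the part of $\widehat\eta$ to be cancelled at step $n$, modulo lower orders) is primitive, i.e.\ lies in $\g$ and is supported on non-resonant words, so that it sits in the image of the diagonal operator $1-\Xi_v$ restricted to $\g$ and a suitable $\beta^{(n)}$ exists. That primitivity argument is the real content of the induction and is absent from your write-up.
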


As we now discuss, this normal form result for elements in the group $\overline{\G}$ is actually related to the analogous result  for elements in the Lie algebra $\overline{\g}$.
Under the assumption (\ref{eq:non-resonance-for-log}), the element $(v,\eta) \in \overline{\G}$ is the exponential of an element $(v,\beta) \in \overline{\g}$. The application of Theorem~\ref{th:theoremnormal}
to the logarithm $(v,\beta)$ yields an element $\kappa \in \G$  (which is used to change variables) and an element  $\widehat \beta \in \g$ (which appears in the normal form).
Then  $(v,\widehat{\eta}) \in \overline{\G}$ in the statement of Theorem~\ref{th:normal-forms-in-G}  can be taken to  coincide with  the exponential of $(v,\widehat{\beta})$ (ie $\widehat{\eta}$ can be chosen as  $\exp_{\star} \widehat{\beta}$). And, in addition, the same $\kappa$  that changes $\beta$ into $\widehat{\beta}$ changes $\eta$ into $\widehat{\eta}$.

\begin{remark}\em
In the Hamiltonian case, for each $\eta\in\G$ the series  $W_{(v,\eta)}$ is canonical symplectic.
The transformation $W_{\kappa}$  and the conjugate map $\overline{W}_{(v,\widehat{\eta})}$ provided by the theorem are also canonical symplectic.
\end{remark}
\begin{remark}\em
 Consider the Hamiltonian case and suppose that Assumption~\ref{ass:ham} is satisfied.
If in addition condition  (\ref{eq:non-resonance-for-log}) holds true, then $\xi_v \widehat{\eta}=0$, and thus, for all $u \in \mathcal{V}(v)$,  $\xi_u \widehat{\eta} = 0$. Then, for all $u \in \mathcal{V}(v)$, $H^u(X)$ is an invariant for the  map $W_{\widehat{\eta}}$ (and hence, also for the map  $\overline{W}_{(v,\widehat{\eta})} = \varphi_v  \circ W_{\widehat{\eta}}$) because
\begin{align*}
H^{u}(\overline{W}_{(0,\widehat{\eta})}(X))
&= \mathcal{H}_{(u,0)}(\overline{W}_{(0,\widehat{\eta})}(X)) \\
&= \mathcal{H}_{(u,-(\xi_u \widehat{\eta})\star \widehat{\eta}^{-1})}(X) \\
&= \mathcal{H}_{(u,0)}(X) \\
&= H^u(X).
\end{align*}
\end{remark}

The pair $(\kappa,\widehat{\eta})$ in the statement of Theorem~\ref{th:normal-forms-in-G} is not unique. The freedom in the choice of this pair is described in our next result.

\begin{theorem}
\label{th:freedom-of-kappa-in-G}
Let $ \eta \in \G$, and assume that $(\kappa,\widehat{\eta}) \in \G \times \G$ satisfies (\ref{eq:hdelta})-- (\ref{eq:v-commuting-G}). Given  $(\widetilde{\kappa}, \widehat{\widetilde{\eta}})\in \G \times \G$ such that
\begin{equation*}
(v,\widehat{\widetilde{\eta}}) = (0,\widetilde{\kappa}) \bigstar (v,\eta) \bigstar (0,\widetilde{\kappa})^{-1} =
(v, \widetilde{\kappa} \star \eta \star (\Xi_{v} \widetilde{\kappa}^{-1})) \in \overline{\G},
\end{equation*}
then $\Xi_v \widehat{\widetilde{\eta}} = \widehat{\widetilde{\eta}}$ if and only if $\Xi_v \delta = \delta$, where $\delta = \widetilde{\kappa} \star \kappa^{-1}$, and in that case, $\widehat{\widetilde{\eta}} = \delta \star \widehat{\eta} \star \delta^{-1}$.
\end{theorem}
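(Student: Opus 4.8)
The plan is to reduce everything to a single conjugation identity relating $\widehat{\widetilde{\eta}}$ to $\widehat{\eta}$, and then to run an induction on word length. First I would record the transformation rule. Since $\delta=\widetilde{\kappa}\star\kappa^{-1}$ we have $\widetilde{\kappa}=\delta\star\kappa$, and a direct check against the definition of $\bigstar$ (using $\Xi_0=\mathrm{id}$ and $(0,\delta)^{-1}=(0,\delta^{-1})$) gives $(0,\widetilde{\kappa})=(0,\delta)\bigstar(0,\kappa)$. Hence, by associativity of $\bigstar$ and (\ref{eq:hdelta}) for the pair $(\kappa,\widehat{\eta})$,
\[
(v,\widehat{\widetilde{\eta}})=(0,\widetilde{\kappa})\bigstar(v,\eta)\bigstar(0,\widetilde{\kappa})^{-1}=(0,\delta)\bigstar(v,\widehat{\eta})\bigstar(0,\delta)^{-1}.
\]
Carrying out this $\bigstar$-conjugation from the definition of $\bigstar$ yields the single scalar identity
\begin{equation*}
\widehat{\widetilde{\eta}}=\delta\star\widehat{\eta}\star\Xi_v\delta^{-1},\qquad\text{equivalently}\qquad \widehat{\widetilde{\eta}}\star\Xi_v\delta=\delta\star\widehat{\eta}. \tag{$\ast$}
\end{equation*}
Everything is then deduced from $(\ast)$ together with the standing facts that $\Xi_v$ is a homomorphism for $\star$ and that $\Xi_v\widehat{\eta}=\widehat{\eta}$.

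For the ``if'' part I would argue in one line: if $\Xi_v\delta=\delta$ then $\Xi_v\delta^{-1}=\delta^{-1}$, so $(\ast)$ becomes $\widehat{\widetilde{\eta}}=\delta\star\widehat{\eta}\star\delta^{-1}$; applying the homomorphism $\Xi_v$ and invoking $\Xi_v\delta=\delta$, $\Xi_v\widehat{\eta}=\widehat{\eta}$, $\Xi_v\delta^{-1}=\delta^{-1}$ gives $\Xi_v\widehat{\widetilde{\eta}}=\widehat{\widetilde{\eta}}$. This simultaneously supplies the asserted formula $\widehat{\widetilde{\eta}}=\delta\star\widehat{\eta}\star\delta^{-1}$.

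The ``only if'' part is the crux. Call a word $w$ \emph{$v$-neutral} when $\exp(\nu^v_w)=1$. Assuming $\Xi_v\widehat{\widetilde{\eta}}=\widehat{\widetilde{\eta}}$, I must show $\Xi_v\delta=\delta$, i.e.\ $\delta_w=0$ for every word $w$ that is not $v$-neutral; I would prove this by induction on the length of $w$, reading off $(\ast)$ coefficient by coefficient. The two ingredients that make the induction close are: (i) both $\widehat{\eta}$ and $\widehat{\widetilde{\eta}}$ are supported on $v$-neutral words (this is precisely $\Xi_v\widehat{\eta}=\widehat{\eta}$ and $\Xi_v\widehat{\widetilde{\eta}}=\widehat{\widetilde{\eta}}$); and (ii) the additivity $\nu^v_{w_1 w_2}=\nu^v_{w_1}+\nu^v_{w_2}$, which makes $w\mapsto\exp(\nu^v_w)$ multiplicative under concatenation, so that a word $w$ which is \emph{not} $v$-neutral admits no splitting $w=w_1 w_2$ with both $w_1$ and $w_2$ $v$-neutral. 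Fix such a $w=\ell_1\cdots\ell_n$ and expand the $w$-coefficient of $\widehat{\widetilde{\eta}}\star\Xi_v\delta=\delta\star\widehat{\eta}$. On the left the leading term is $\exp(\nu^v_w)\delta_w$; the tail term $\widehat{\widetilde{\eta}}_w$ vanishes by (i), and every genuine split contributes $\widehat{\widetilde{\eta}}_{w_1}(\Xi_v\delta)_{w_2}$, which needs $w_1$ $v$-neutral by (i) and $w_2$ $v$-neutral by the induction hypothesis, impossible by (ii). On the right, $(\delta\star\widehat{\eta})_w$ collapses to $\delta_w$ for the same reason. Thus $\exp(\nu^v_w)\delta_w=\delta_w$, and since $w$ is not $v$-neutral this forces $\delta_w=0$, completing the induction and giving $\Xi_v\delta=\delta$; the formula $\widehat{\widetilde{\eta}}=\delta\star\widehat{\eta}\star\delta^{-1}$ then follows exactly as in the ``if'' part. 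The main obstacle is isolating observations (i)--(ii): without the multiplicativity in (ii) the cross terms in the convolution would not drop out, and the coefficient recursion would couple $\delta_w$ to $v$-neutral subwords, so that $\delta_w$ could fail to vanish.
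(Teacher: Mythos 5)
Your proof is correct. Note that the paper itself does not prove Theorem~\ref{th:freedom-of-kappa-in-G}: it is stated without proof as the group analogue of Theorem~\ref{th:new}, with the remark that \lq\lq full details will not be given\rq\rq. Your argument is exactly the right multiplicative adaptation of the paper's proof of Theorem~\ref{th:new}. The reduction to the single conjugation identity $\widehat{\widetilde{\eta}}\star\Xi_v\delta=\delta\star\widehat{\eta}$ plays the role of the homological equation $\xi_v\delta=\delta\star\widehat{\beta}-\widehat{\tilde\beta}\star\delta$ there, and the \lq\lq if\rq\rq\ direction is the same one-line homomorphism computation. The one place where you genuinely diverge is the \lq\lq only if\rq\rq\ induction: the paper applies the derivation $\xi_v$ a second time to its homological equation and inducts on $(\xi_v\delta)_w=0$, whereas you read off the $w$-coefficient of the group homological equation directly and kill the cross terms using (i) the support of $\widehat{\eta}$, $\widehat{\widetilde{\eta}}$ on $v$-neutral words and (ii) the multiplicativity of $w\mapsto\exp(\nu^v_w)$ under concatenation, arriving at $\exp(\nu^v_w)\delta_w=\delta_w$. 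This is arguably cleaner in the group setting, where \lq\lq applying $\Xi_v$ again\rq\rq\ does not produce a subtractable identity the way $\xi_v$ does in the Lie algebra; your observation (ii), that a non-$v$-neutral word admits no splitting into two $v$-neutral factors, is precisely the point that makes the induction close, and you correctly identify it as the crux.
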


Back into the original variables, one obtains a decomposition of $(v,\eta)$ as the product of two commuting elements in $\overline{\G}$, namely,
\begin{equation*}
 (0,\kappa)^{-1} \bigstar (v,\uno) \bigstar (0,\kappa) =
(v, \kappa^{-1}  \star (\Xi_{v} \kappa))  \in \overline{\G}
\end{equation*}
and
\begin{equation*}
 (0,\kappa)^{-1} \bigstar (0,\widehat{\eta}) \bigstar (0,\kappa) =
(v, \kappa^{-1}  \star \widehat{\eta} \star \kappa) \in \G,
\end{equation*}
which provides a representation of the original map $\overline{W}_{(v,\eta)}$ as the composition of two commuting maps $\overline{W}_{(v,\kappa^{-1}  \star (\Xi_{v} \kappa)) )}$ and $W_{\kappa^{-1}  \star \widehat{\eta} \star \kappa}$. Theorem~\ref{th:freedom-of-kappa-in-G} implies that both $ \kappa^{-1}  \star (\Xi_{v} \kappa)$ and $\overline{\eta}=\kappa^{-1}  \star \widehat{\eta} \star \kappa)$ are independent of the choice of $\kappa$.

We finally have:
\begin{remark}\em
 For Hamiltonian problems satisfying Assumption~\ref{ass:ham} and under the additional non-resonance condition (\ref{eq:non-resonance-for-log}),   the quantity
$
\mathcal{H}_{(u,\kappa^{-1} \star (\xi_{u} \kappa))}(x)
$
is, for each $u \in  \mathcal{V}(v)$, a formal invariant of the map $\overline{W}_{(v,\eta)}$.
In fact,
from the last remark we know that $H^u(X)$ is an invariant for the map $X \mapsto \overline{W}_{(v,\widehat{\eta})}(X)$ and thus
\begin{align*}
H^u(W_{\kappa^{-1}}(x)) &= \mathcal{H}_{(u,0)}(\overline{W}_{(0,\kappa^{-1})}(x)) \\
&=  \mathcal{H}_{(u,-(\xi_u \kappa^{-1}) \star \kappa))}(x) \\
&=  \mathcal{H}_{(u,\kappa^{-1} \star \xi_u (\kappa)))}(x),
\end{align*}
is an invariant for the map $x \mapsto \overline{W}_{(v,\eta)}(x)$, where $x=W_{\kappa}(X)$.
\end{remark}

\bigskip

{\bf Acknowledgement}. A. Murua and J.M.
Sanz-Serna have been supported by proj\-ects MTM2013-46553-C3-2-P and MTM2013-46553-C3-1-P from Ministerio de Eco\-nom\'{\i}a y Comercio, Spain. Additionally A. Murua has been partially supported by the Basque Government  (Consolidated Research Group IT649-13).

\end{document}